\def\a{\alpha}
\def\s{\sigma}
\def\ps{\psi}
\newcommand{\ti}[1]{\tilde{#1}}
\newcommand{\abs}[1]{\lvert#1\rvert}
\newcommand{\norm}[1]{\lVert#1\rVert}
\newcommand{\bnorm}[1]{\big\lVert#1\big\rVert}
\newcommand{\blangle}{\big{\langle}}
\newcommand{\Blangle}{\Big{\langle}}
\newcommand{\brangle}{\big{\rangle}}
\newcommand{\Brangle}{\Big{\rangle}}
\newcommand{\noi}{\noindent}
\newcommand{\vect}[1]{\mathbf{#1}}
\newcommand{\vectx}{\vect{x}}
\newcommand{\vecty}{\vect{y}}
\newcommand{\vectz}{\vect{z}}
\newcommand{\vectk}{\vect{k}}
\newcommand{\vectq}{\vect{q}}
\newcommand{\mcal}[1]{\mathcal{#1}}
\newcommand{\mscr}[1]{\mathscr{#1}}
\newcommand{\mscrF}{\mathscr{F}}
\newcommand{\mscrS}{\mathscr{S}}
\newcommand{\mbbN}{\mathbb{N}}
\newcommand{\mbbR}{\mathbb{R}}
\newcommand{\mbbC}{\mathbb{C}}
\newtheorem{definition}{Def\mbox{}inition}
\newtheorem{lemma}{Lemma}
\newtheorem{proposition}{Proposition}
\newtheorem{theorem}{Theorem}
\newtheorem{corollary}{Corollary}
\begin{document}

\title{A novel def\-inition of real Fourier transform}

\author{\Large{Fulvio Sbis\`a\footnote{fulviosbisa@gmail.com ; https://orcid.org/0000-0002-6341-1785 .}}}

\affil{\normalsize{Departamento de F\'isica Te\'orica, Universidade do Estado do Rio de Janeiro,\\
Rua S\~{a}o Francisco Xavier 524, Maracan\~{a}, \\
CEP 20550-900, Rio de Janeiro -- RJ -- Brazil\thanks{On leave.}}
\and 
\normalsize{Faculdade Est\'{a}cio de S\'{a} -- Polo Jardim Camburi \\
Av.\ Dr.\ Herwan Modenese Wanderley, 1001 - Jardim Camburi, \\
CEP 29060-510, Vit\'{o}ria -- ES -- Brazil}}

\date{}

\vspace{.5cm}

\maketitle

\thispagestyle{empty}

\begin{abstract}
We propose a novel def\-inition of Fourier transform, with the property that the transform of a real function is again a real function (without doubling the number of real components). We prove the inversion theorem for the novel def\-inition, and show that it shares the good properties of the usual def\-inition.
\end{abstract}

\noi \textit{Keywords}: Fourier analysis; Fourier transform; functional analysis; modal expansion.

\section{Introduction}

Fourier analysis is undoubtedly one of the most used tools both in Physics, pure and applied Mathematics, and Engineering. It is indeed dif\-f\-icult to overemphasize its relevance. Its usefulness can be traced back to the invertibility of the Fourier transformation, and to the properties of its modes, the complex exponentials. On the one hand, the map $\a \to e^{i k \a}$ is a homomorphism of the additive group of the real numbers into the multiplicative group of the unit circle in the complex plane. On the other hand, the complex exponentials are eigenfunctions of the derivative operator. These properties for instance imply that, if a function can be Fourier-transformed and the convergence is good enough, a linear dif\-ferential operator with constant coef\-f\-icients can be mapped, in the reciprocal space, to an operator which multiplies the Fourier transform by a polynomial. The only strong limitation on the use of the Fourier transform and anti-transform is that they presuppose quite stringent decay properties.

When a function $f : \mbbR^{n} \to \mbbC$ is absolutely integrable, its Fourier transform $\mscr{F}[f]$ and anti-transform $\mscr{F}_{\!a}[f]$ are def\-ined via the integral representations
\begin{equation} \label{complex Fourier transform}
\mscr{F}[f](\vecty) = \frac{1}{\sqrt{(2 \pi)^{n}}} \int_{\mbbR^{n}} \! f(\vectx) \, e^{- i \vecty \cdot \vectx} \, d\vectx \quad ,
\end{equation}
and
\begin{equation} \label{complex Fourier antitransform}
\mscr{F}_{\!a}[f](\vectx) = \frac{1}{\sqrt{(2 \pi)^{n}}} \int_{\mbbR^{n}} \! f(\vecty) \, e^{i \vecty \cdot \vectx} \, d\vecty \quad .
\end{equation}
A few words are in order about notations and conventions. Throughout the article, bold-face letters denote vectors of $\mbbR^{n}$, and the central dot in $\vecty \cdot \vectx$ denotes the standard Euclidean inner product in $\mbbR^{n}$. The integrations are to be understood as Lebesgue integrals. The dependence of a function on real/complex numbers or vectors is indicated with round brackets, while the dependence on a function is indicated with square brackets. The symbols $\Re$ and $\Im$ indicate respectively the real and imaginary parts of a complex number. We use the ``symmetric'' choice for the normalization of the Fourier transform and anti-transform. Note that we adhere to the (admittedly confusing) practice of using the term ``transform'' to denote both the transformation (operator) and the function which is obtained applying the transformation.

It is a classic result \cite{Rudin 2} that, if $f$ and $\mscr{F}[f]$ are absolutely integrable, then $\big(\mscr{F}_{\!a} \circ \mscr{F}\big)[f] = f$ almost everywhere. Moreover, the linear operators $\mscr{F}$ and $\mscr{F}_{\!a}$ can be extended via linearity and continuity to unitary maps $L^{2}(\mbbR^{n}, \mbbC) \to L^{2}(\mbbR^{n}, \mbbC)$ (Plancherel theorem). The operators thus extended are one the inverse of the other, and one the adjoint of the other \cite{Reed Simon 2}. In another direction, $\mscr{F}$ and $\mscr{F}_{\!a}$ can be extended to bounded linear maps $L^{1}(\mbbR^{n}, \mbbC) \to C_{_{0}}(\mbbR^{n}, \mbbC)$ (Riemann-Lebesgue lemma), where $C_{_{0}}(\mbbR^{n}, \mbbC)$ denotes the space of continuous maps which decay to zero at inf\-inity. With these extensions in mind, it is more convenient (and arguably more elegant) to def\-ine the transform and anti-transform on the Schwartz space $\mscr{S}(\mbbR^{n}, \mbbC)$ of smooth functions of rapid decay.

\smallskip
The properties of the complex exponentials are in fact so convenient, that the def\-initions \eqref{complex Fourier transform}--\eqref{complex Fourier antitransform} are used also when dealing with real functions, by resorting to the well-known method of identifying a real function with a complex function whose imaginary part vanishes identically. Alternatively, it is of course possible to decompose the representation \eqref{complex Fourier transform} into its real and imaginary parts. In the latter approach, one def\-ines the Fourier transform as a \emph{couple} of real functions $\mcal{F}[f] = \big( \mcal{F}_{_{\!1}}[f] \, , \mcal{F}_{_{\!2}}[f] \big)\,$, where
\begin{subequations} \label{naive real Fourier transform}
\begin{align}
\mcal{F}_{_{\!1}}[f](\vecty) &= \frac{1}{\sqrt{(2 \pi)^{n}}} \int_{\mbbR^{n}} \! f(\vectx) \, \cos(\vecty \cdot \vectx) \, d\vectx \quad , \label{naive real Fourier transform 1} \\[2mm]
\mcal{F}_{_{\!2}}[f](\vecty) &= - \, \frac{1}{\sqrt{(2 \pi)^{n}}} \int_{\mbbR^{n}} \! f(\vectx) \, \sin(\vecty \cdot \vectx) \, d\vectx \quad , \label{naive real Fourier transform 2}
\end{align}
\end{subequations}
and def\-ines the anti-transform as follows
\begin{equation} \label{naive real Fourier antitransform}
\mcal{F}_{\!a} \big[ f_{_{1\!}}\, , f_{_{2}} \big](\vectx) = \frac{1}{\sqrt{(2 \pi)^{n}}} \int_{\mbbR^{n}} \! \Big( f_{_{1\!}}(\vecty) \, \cos(\vecty \cdot \vectx) - f_{_{2}}(\vecty) \, \sin(\vecty \cdot \vectx) \Big) \, d\vecty \quad .
\end{equation}
The minus signs in \eqref{naive real Fourier transform 2} and \eqref{naive real Fourier antitransform} are purely conventional, and stem from the desire of highlighting the link with the complex def\-initions. They can be harmlessly substituted by plus signs. Note that $\mcal{F}_{_{\!1}}[f]$ and $\mcal{F}_{_{\!2}}[f]$ are respectively even and odd with respect to a parity transformation $\vecty \to - \vecty$ (the same holds for the real and imaginary parts of $\mscr{F}[f]$ when $\Im[f] = 0$). So, for the inversion theorem to hold, it is necessary to restrict accordingly the co-domain of $\mcal{F}$ and the domain of $\mcal{F}_{\!a}$ (likewise, when $\Im[f] = 0\,$, it is necessary to restrict the co-domain of $\mscr{F}$ and the domain of $\mscr{F}_{\!a}$).

\smallskip
The def\-initions \eqref{complex Fourier transform}--\eqref{naive real Fourier antitransform} are widely used. No\-ne\-the\-less, as long as real functions are considered, some aspects of the formalism are not completely satisfying. It is somewhat annoying that the Fourier transform of a real function involves \emph{two} real functions. Also, it would be nice to avoid having to impose symmetry conditions on the components of the Fourier transform to guarantee the invertibility.

The aim of this article is to propose a simple novel def\-inition of Fourier transform of a real function (with associated inverse), which shares the good properties of the def\-initions \eqref{complex Fourier transform}--\eqref{naive real Fourier antitransform} while being free of the unsavory aspects mentioned above. The article is structured as follows: in section \ref{sec: inversion, involutivity and parity} we prove the inversion theorem and characterize the parity properties of the transformation; in section \ref{sec: unitarity, extensions and eigenfunctions} we prove the preservation of the $L^{2}$ inner product and consider the relevant domain and co-domain extensions; in section \ref{sec: convolution and Fourier transform} we discuss the interplay between the novel def\-inition and the notion of convolution. Furthermore, we include in Appendix \ref{app: modes} a discussion of the novel Fourier transformation seen as a modal expansion.

\section{Inversion theorem, involutivity and parity}
\label{sec: inversion, involutivity and parity}

We follow \cite{Reed Simon 2} and def\-ine the Fourier transform and anti-transform on the space of smooth functions of rapid decrease, with the understanding that the results will be subsequently extended domain- and co-domain-wise.

\subsection{Def\-inition and inversion theorem}

Let $f$ be a real function. Our aim is to provide a def\-inition of Fourier transform of $f$ which involves only \emph{one} real function, and moreover provide a def\-inition of Fourier anti-transform in such a way that the inversion theorem holds. The main condition we impose on the two transformations is that they be linear operators.

A simple way to obtain, in a linear fashion, a real function out of the couple $\mcal{F}_{_{\!1}}[f]$ and $\mcal{F}_{_{\!2}}[f]\,$, is to just sum them. The problem is that, in general, it is not possible to retrieve two addends after they have been summed. However, the circumstance that $\mcal{F}_{_{\!1}}[f]$ and $\mcal{F}_{_{\!2}}[f]$ have def\-inite parity makes it possible to retrieve them, furthermore in a linear fashion.

To avoid cumbersome expressions, let us indicate $\mscr{S}(\mbbR^{n}, \mbbR)$ simply with $\mscr{S}$, and call $\mscr{S}_{\!sa}$ the following ``symmetric-antisymmetric'' linear subspace of $\mscr{S} \oplus \mscr{S}$:
\begin{multline*}
\mscr{S}_{\!sa} = \big\{ (g_{_{1\!}}\, , g_{_{2}}) \in \mscr{S}(\mbbR^{n}, \mbbR) \oplus \mscr{S}(\mbbR^{n}, \mbbR) \mid \\
\mid g_{_{1\!}}(\vectx) = g_{_{1\!}}(-\vectx) \,\,\, , \,\, g_{_{2\!}}(\vectx) = - g_{_{2\!}}(-\vectx) \,\,\, , \,\, \forall \, \vectx \in \mbbR^{n} \Big\} \quad .
\end{multline*}
We note in passing that the Fourier transform $\mcal{F}$ and anti-transform $\mcal{F}_{a}$ are linear isomorphisms
\begin{equation*}
\mscr{S} \xrightarrow[]{ \,\,\, \mcal{F} \,\,\, } \mscr{S}_{\!sa} \xrightarrow[]{ \,\,\, \mcal{F}_{a} \,\,\, } \mscr{S} \quad .
\end{equation*}
Let us then def\-ine the ``sum'' and ``parity decomposition'' operators
\begin{align*}
S &: \mscr{S}_{\!sa} \to \mscr{S} \quad , & D &: \mscr{S} \to \mscr{S}_{\!sa} \quad ,
\end{align*}
via the relations
\begin{equation} \label{sum operator}
S \big[ \, g_{_{1\!}}\, , g_{_{2\!}} \, \big](\vectx) = g_{_{1\!}}(\vectx) + g_{_{2}}(\vectx) \quad ,
\end{equation}
and
\begin{equation} \label{decomposition operator}
D[ \, g \, ](\vectx) = \bigg( \, \frac{1}{2} \Big( g(\vectx) + g(-\vectx) \Big) \,\, , \,\, \frac{1}{2} \Big( g(\vectx) - g(-\vectx) \Big) \bigg) \quad .
\end{equation}
It is immediate to recognize that $S$ and $D$ are linear isomorphisms, and that they are one the inverse of the other. We then def\-ine
\begin{definition}[real Fourier transform and anti-transform] \label{def: real Fourier transform and anti-transf}
We call respectively \emph{real Fourier transform} and \emph{real Fourier anti-transform} the operators
\begin{equation*}
\mscr{F}_{_{\!\mbbR}} \,\, , \, \mscr{F}^{a}_{_{\!\mbbR}} \,\, : \,\, \mscr{S} \to \mscr{S} \quad ,
\end{equation*}
def\-ined by
\begin{align} \label{real Fourier transf and anti-transf}
\mscr{F}_{_{\!\mbbR}} &= S \circ \mcal{F} \quad , & \mscr{F}^{a}_{_{\!\mbbR}} &= \mcal{F}_{\!a} \circ D \quad .
\end{align}
\end{definition}
In words, the real Fourier transform $\mscr{F}_{_{\!\mbbR}}[f]$ is obtained by taking the Fourier transform $\mcal{F}$, and then summing the two components $\mcal{F}_{_{\!1}}[f]$ and $\mcal{F}_{_{\!2}}[f]\,$; the real Fourier anti-transform $\mscr{F}^{a}_{_{\!\mbbR}}[g]$ is obtained by f\-irst splitting $g$ into its parity-even and party-odd parts, and then applying the anti-transform $\mcal{F}_{\!a}\,$. It is not dif\-f\-icult to show that the inversion theorem holds:
\begin{theorem}[inversion theorem]
The linear operators $\mscr{F}_{_{\!\mbbR}}$ and $\mscr{F}^{a}_{_{\!\mbbR}}$ are isomorphisms, and
\begin{align*}
\mscr{F}_{_{\!\mbbR}}^{a} \circ \mscr{F}_{_{\!\mbbR}} &= \textup{Id} \quad , & \mscr{F}_{_{\!\mbbR}} \circ \mscr{F}^{a}_{_{\!\mbbR}} &= \textup{Id} \quad ,
\end{align*}
where $\textup{Id}$ is the identity operator on $\mscr{S}$.
\end{theorem}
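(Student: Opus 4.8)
The plan is to exploit the fact that both new operators are built by composing maps that are \emph{already known} to be mutually inverse isomorphisms, so that the inversion theorem reduces to a purely formal cancellation of the inner compositions, with no analytic input whatsoever. First I would collect the two ingredient inversion statements furnished by the preceding discussion. On the one hand, the naive real transform and anti-transform form a pair of mutually inverse linear isomorphisms between $\mscr{S}$ and $\mscr{S}_{\!sa}$, i.e.\ $\mcal{F}_{\!a} \circ \mcal{F} = \textup{Id}$ on $\mscr{S}$ and $\mcal{F} \circ \mcal{F}_{\!a} = \textup{Id}$ on $\mscr{S}_{\!sa}$; this is the classical real inversion theorem underlying definitions \eqref{naive real Fourier transform 1}--\eqref{naive real Fourier antitransform}, obtained by splitting the complex inversion result into real and imaginary parts. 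On the other hand, the sum and decomposition operators $S$ and $D$ are mutually inverse isomorphisms between $\mscr{S}_{\!sa}$ and $\mscr{S}$, i.e.\ $S \circ D = \textup{Id}$ on $\mscr{S}$ and $D \circ S = \textup{Id}$ on $\mscr{S}_{\!sa}$, which follows at once from their explicit formulas \eqref{sum operator}--\eqref{decomposition operator}.

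Granting these, the first assertion is immediate: since $\mscr{F}_{_{\!\mbbR}} = S \circ \mcal{F}$ and $\mscr{F}^{a}_{_{\!\mbbR}} = \mcal{F}_{\!a} \circ D$ are each a composition of two isomorphisms, they are themselves linear isomorphisms $\mscr{S} \to \mscr{S}$. For the inverse relations I would simply chain the four maps and collapse the inner pair using associativity of composition:
\[
\mscr{F}^{a}_{_{\!\mbbR}} \circ \mscr{F}_{_{\!\mbbR}} = \mcal{F}_{\!a} \circ (D \circ S) \circ \mcal{F} = \mcal{F}_{\!a} \circ \mcal{F} = \textup{Id} \quad ,
\]
where the middle factor is the identity on $\mscr{S}_{\!sa}$, and symmetrically
\[
\mscr{F}_{_{\!\mbbR}} \circ \mscr{F}^{a}_{_{\!\mbbR}} = S \circ (\mcal{F} \circ \mcal{F}_{\!a}) \circ D = S \circ D = \textup{Id} \quad ,
\]
where the middle factor is again the identity on $\mscr{S}_{\!sa}$. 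This exhausts the two required identities.

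The only point demanding care---and the closest thing to an obstacle---is the bookkeeping of domains and co-domains, precisely the reason the intermediate space $\mscr{S}_{\!sa}$ was introduced with its parity constraints. The cancellations $\mcal{F} \circ \mcal{F}_{\!a} = \textup{Id}$ and $D \circ S = \textup{Id}$ are valid on $\mscr{S}_{\!sa}$ exactly because the co-domain of $\mcal{F}$ (respectively of $D$) was restricted to the symmetric-antisymmetric subspace; on a generic pair of functions these relations would fail. Concretely, for $(g_{_{1\!}}, g_{_{2}}) \in \mscr{S}_{\!sa}$ one sets $h = S[g_{_{1\!}}, g_{_{2}}] = g_{_{1\!}} + g_{_{2}}$ and checks, using that $g_{_{1}}$ is even and $g_{_{2}}$ is odd, that $h(\vectx) + h(-\vectx) = 2 g_{_{1\!}}(\vectx)$ and $h(\vectx) - h(-\vectx) = 2 g_{_{2\!}}(\vectx)$, so that $D[h] = (g_{_{1\!}}, g_{_{2}})$; by contrast $S \circ D = \textup{Id}$ on $\mscr{S}$ holds unconditionally. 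Once this elementary verification is recorded, the theorem follows by formal composition with no estimates required.
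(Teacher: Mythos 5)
Your proof is correct and follows essentially the same route as the paper's: both reduce the statement to the facts that $\mcal{F}, \mcal{F}_{\!a}$ and $S, D$ are mutually inverse isomorphisms, and then cancel the inner composition, which in each chain is the identity on $\mscr{S}_{\!sa}$. Your explicit verification that $D \circ S = \textup{Id}$ on $\mscr{S}_{\!sa}$ only spells out what the paper declares ``immediate to recognize.''
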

\begin{proof}
$\mscr{F}_{_{\!\mbbR}}$ and $\mscr{F}^{a}_{_{\!\mbbR}}$ are compositions of isomorphisms, so they are isomorphisms themselves. Furthermore, since $\mcal{F}$ and $\mcal{F}_{a}$ are one the inverse of the other, we have
\begin{equation*}
\mscr{F}^{a}_{_{\!\mbbR}} \circ \mscr{F}_{_{\!\mbbR}} = \mcal{F}_{\!a} \circ \big( D \circ S \big) \circ \mcal{F} = \mcal{F}_{\!a} \circ \textup{Id}_{sa} \circ \mcal{F} = \textup{Id} \quad , 
\end{equation*}
and
\begin{equation*}
\mscr{F}_{_{\!\mbbR}} \circ \mscr{F}_{_{\!\mbbR}}^{a} = S \circ \big( \mcal{F} \circ \mcal{F}_{\!a} \big) \circ D = S \circ \textup{Id}_{sa} \circ D = \textup{Id} \quad , 
\end{equation*}
where $\textup{Id}_{sa}$ denotes the identity operator on $\mscr{S}_{\!sa}\,$.
\end{proof}

\subsection{Integral representation and involutivity}

From the abstract def\-inition \eqref{real Fourier transf and anti-transf}, we easily obtain the explicit integral representation
\begin{equation} \label{real Fourier transform}
\mscr{F}_{_{\!\mbbR}}[f](\vecty) = \frac{1}{\sqrt{(2 \pi)^{n}}} \int_{\mbbR^{n}} \! f(\vectx) \, \Big( \cos (\vecty \cdot \vectx) - \sin (\vecty \cdot \vectx) \Big) \, d\vectx \quad .
\end{equation}
The evaluation of the explicit representation of the real Fourier \emph{anti}-trans\-form reveals the following result:
\begin{proposition}[involutivity of the real Fourier transform] \label{prop: involutivity}
The real Fourier anti-transform $\mscr{F}_{_{\!\mbbR}}^{a}$ coincides with the real Fourier transform $\mscr{F}_{_{\!\mbbR}}\,$. In other words, $\mscr{F}_{_{\!\mbbR}}$ is an involution:
\begin{equation} \label{involutivity}
\mscr{F}_{_{\!\mbbR}} \circ \mscr{F}_{_{\!\mbbR}} = \textup{Id} \quad .
\end{equation}
\end{proposition}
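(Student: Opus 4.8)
The plan is to compute the explicit integral representation of the anti-transform $\mscr{F}^{a}_{_{\!\mbbR}} = \mcal{F}_{\!a} \circ D$ and to recognize it as \eqref{real Fourier transform}. Concretely, I would apply the decomposition operator \eqref{decomposition operator} to an arbitrary $g \in \mscr{S}$, producing the parity-even and parity-odd components $\tfrac{1}{2}\big(g(\vecty) + g(-\vecty)\big)$ and $\tfrac{1}{2}\big(g(\vecty) - g(-\vecty)\big)$, and then insert this couple into the integral representation \eqref{naive real Fourier antitransform} of $\mcal{F}_{\!a}$. This produces a single integral over $\mbbR^{n}$ whose integrand splits into four terms: the products of $g(\vecty)$ and $g(-\vecty)$ with $\cos(\vecty \cdot \vectx)$ and with $\sin(\vecty \cdot \vectx)$.

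The key step is the change of variables $\vecty \to -\vecty$ applied to the two terms carrying $g(-\vecty)$. Since the integration is over all of $\mbbR^{n}$ and the substitution has unit Jacobian, the measure is unchanged, while the even parity of the cosine leaves $\cos(\vecty \cdot \vectx)$ fixed and the odd parity of the sine flips the sign of $\sin(\vecty \cdot \vectx)$. As a result the two cosine contributions add and the two sine contributions add, so that the prefactors $\tfrac{1}{2}$ combine into unit coefficients, and one is left with
\[
\mscr{F}^{a}_{_{\!\mbbR}}[g](\vectx) = \frac{1}{\sqrt{(2 \pi)^{n}}} \int_{\mbbR^{n}} \! g(\vecty) \, \Big( \cos(\vecty \cdot \vectx) - \sin(\vecty \cdot \vectx) \Big) \, d\vecty \quad .
\]
Comparing with \eqref{real Fourier transform} and using that the kernel $\cos(\vecty \cdot \vectx) - \sin(\vecty \cdot \vectx)$ is symmetric under $\vectx \leftrightarrow \vecty$ (because $\vecty \cdot \vectx = \vectx \cdot \vecty$), I would conclude that $\mscr{F}^{a}_{_{\!\mbbR}}$ and $\mscr{F}_{_{\!\mbbR}}$ are given by one and the same integral formula, hence $\mscr{F}^{a}_{_{\!\mbbR}} = \mscr{F}_{_{\!\mbbR}}$. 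The involutivity \eqref{involutivity} then drops out of the inversion theorem: substituting this equality into $\mscr{F}^{a}_{_{\!\mbbR}} \circ \mscr{F}_{_{\!\mbbR}} = \textup{Id}$ immediately gives $\mscr{F}_{_{\!\mbbR}} \circ \mscr{F}_{_{\!\mbbR}} = \textup{Id}$.

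I do not anticipate a genuine obstacle: the argument is a short computation once the decomposition is written out explicitly. The only point demanding care is the sign bookkeeping in the change of variables, and in particular the fact that the odd parity of the sine makes the $g(-\vecty)\sin$ contribution \emph{reinforce} the $g(\vecty)\sin$ term rather than cancel it, thereby reproducing precisely the $-\sin$ part of the kernel. One can also check that the same mechanism goes through verbatim if the minus signs in \eqref{naive real Fourier transform 2} and \eqref{naive real Fourier antitransform} are replaced by plus signs, in agreement with the earlier remark that those signs are purely conventional.
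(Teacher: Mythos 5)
Your proposal is correct and follows essentially the same route as the paper: write out $\mcal{F}_{\!a} \circ D$ explicitly via \eqref{naive real Fourier antitransform} and \eqref{decomposition operator}, group the $g(\vecty)$ and $g(-\vecty)$ terms, apply the change of variables $\vecty \to -\vecty$ to the latter, and recover the kernel of \eqref{real Fourier transform}, after which involutivity follows from the inversion theorem. The sign bookkeeping you highlight is exactly the point the paper's computation turns on.
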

\begin{proof}
Taking into account the expressions \eqref{naive real Fourier antitransform} and \eqref{decomposition operator}, the def\-inition \eqref{real Fourier transf and anti-transf} of $\mscr{F}_{_{\!\mbbR}}^{a}$ gives
\begin{multline*}
\mscr{F}_{_{\!\mbbR}}^{a}[f](\vectx) = \frac{1}{\sqrt{(2 \pi)^{n}}} \int_{\mbbR^{n}} \bigg[ \, \frac{1}{2} \, \Big( f(\vecty) + f(-\vecty) \Big) \, \cos(\vecty \cdot \vectx) \,\, - \\
- \frac{1}{2} \, \Big( f(\vecty) - f(-\vecty) \Big) \, \sin(\vecty \cdot \vectx) \, \bigg] \, d\vecty = \\
= \frac{1}{\sqrt{(2 \pi)^{n}}} \, \Bigg[ \, \frac{1}{2} \int_{\mbbR^{n}} f(\vecty) \Big( \cos(\vecty \cdot \vectx) - \sin(\vecty \cdot \vectx) \Big) \, d\vecty \,\, + \\
+ \frac{1}{2} \, \int_{\mbbR^{n}} f(-\vecty) \Big( \cos(\vecty \cdot \vectx) + \sin(\vecty \cdot \vectx) \Big) \, d\vecty \, \Bigg] \quad .
\end{multline*}
Performing in the second integral the change of integration variables $\vecty \to - \vecty\,$, we obtain the thesis.
\end{proof}

This result is somewhat unexpected, but very satisfactory. The asymmetry between Fourier transform and anti-transform completely disappears for real functions. Note that, if we adopted a non-symmetric choice of normalization for $\mcal{F}$ and $\mcal{F}_{a}\,$, we would obtain that the operator $\mscr{F}_{_{\!\mbbR}}^{a}$ is proportional to $\mscr{F}_{_{\!\mbbR}}\,$, but not equal. Such a situation seems quite artif\-icial, so the proposition \ref{prop: involutivity} provides, a posteriori, an additional motivation for the ``symmetric'' normalization choice.

Note that, as is true for the expressions \eqref{naive real Fourier transform 2}--\eqref{naive real Fourier antitransform}, also for the real Fourier transform $\mscr{F}_{_{\!\mbbR}}$ the minus sign in \eqref{real Fourier transform} is purely conventional, and can harmlessly turned into a plus sign.

\subsection{Fourier transform and parity}

Let us now investigate the relationship between the parity of a real function and that of its real Fourier transform. We say that a function $f$ is parity-even if for every $\vectx \in \mbbR^{n}$ we have $f(-\vectx) = f(\vectx)\,$; likewise, we say that a function $f$ is parity-odd if for every $\vectx \in \mbbR^{n}$ we have $f(-\vectx) = -f(\vectx)\,$.

\smallskip
The main result in this regard is:
\begin{proposition} \label{prop: parity properties}
Let $f \in \mscrS$. Then
\begin{enumerate}
\item $f$ is parity-even if and only if $\mscr{F}_{_{\!\mbbR}}[f]$ is parity-even;
\item $f$ is parity-odd if and only if $\mscr{F}_{_{\!\mbbR}}[f]$ is parity-odd.
\end{enumerate}
\end{proposition}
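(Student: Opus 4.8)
The plan is to read off the parity of $\mscr{F}_{_{\!\mbbR}}[f]$ directly from the integral representation \eqref{real Fourier transform}, exploiting the fact that the cosine and sine transforms have opposite, fixed parities in the transform variable. Writing out the integrand, we have $\mscr{F}_{_{\!\mbbR}}[f] = \mcal{F}_{_{\!1}}[f] + \mcal{F}_{_{\!2}}[f]$, and as already observed in the introduction $\mcal{F}_{_{\!1}}[f]$ is parity-even and $\mcal{F}_{_{\!2}}[f]$ is parity-odd with respect to $\vecty \to -\vecty$, for \emph{every} $f \in \mscr{S}$. Hence $\mcal{F}_{_{\!1}}[f]$ and $\mcal{F}_{_{\!2}}[f]$ are precisely the parity-even and parity-odd parts of $\mscr{F}_{_{\!\mbbR}}[f]$. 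The whole statement then reduces to two equivalences: $\mscr{F}_{_{\!\mbbR}}[f]$ is parity-even if and only if $\mcal{F}_{_{\!2}}[f] \equiv 0$, and parity-odd if and only if $\mcal{F}_{_{\!1}}[f] \equiv 0$.

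For the forward implications I would argue via the parity of the integrand in the integration variable $\vectx$. If $f$ is parity-even, then $f(\vectx)\sin(\vecty \cdot \vectx)$ is odd in $\vectx$, so its integral over $\mbbR^{n}$ vanishes and $\mcal{F}_{_{\!2}}[f] \equiv 0$; by the previous paragraph $\mscr{F}_{_{\!\mbbR}}[f] = \mcal{F}_{_{\!1}}[f]$ is parity-even. Symmetrically, if $f$ is parity-odd then $f(\vectx)\cos(\vecty \cdot \vectx)$ is odd in $\vectx$, whence $\mcal{F}_{_{\!1}}[f] \equiv 0$ and $\mscr{F}_{_{\!\mbbR}}[f] = \mcal{F}_{_{\!2}}[f]$ is parity-odd.

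For the reverse implications, rather than invoking injectivity of the cosine/sine transforms, I would exploit the involutivity established in Proposition \ref{prop: involutivity}. If $\mscr{F}_{_{\!\mbbR}}[f]$ is parity-even, set $g = \mscr{F}_{_{\!\mbbR}}[f] \in \mscr{S}$; applying the already-proved forward implication to $g$ shows that $\mscr{F}_{_{\!\mbbR}}[g]$ is parity-even, while by \eqref{involutivity} we have $\mscr{F}_{_{\!\mbbR}}[g] = \mscr{F}_{_{\!\mbbR}}\big[\mscr{F}_{_{\!\mbbR}}[f]\big] = f$, so $f$ is parity-even. The parity-odd case is handled identically, closing both equivalences.

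I expect the only delicate point to be checking that the forward step is genuinely applicable in the reverse direction, i.e.\ that it may be invoked for the arbitrary Schwartz function $g = \mscr{F}_{_{\!\mbbR}}[f]$ and not merely for the original $f$; since $\mscr{F}_{_{\!\mbbR}}$ maps $\mscr{S}$ into $\mscr{S}$, this is automatic. The alternative reverse argument, deducing evenness of $f$ from $\mcal{F}_{_{\!2}}[f] \equiv 0$ through injectivity of the sine transform on odd functions, would instead require the classical inversion theorem, so the involutivity route is both shorter and self-contained within this section.
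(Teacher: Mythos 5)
Your proof is correct and follows essentially the same route as the paper's: the forward implications are obtained from the parity of the integrand in $\vectx$ (killing the sine or cosine contribution), and the reverse implications are deduced by applying the forward step to $\mscr{F}_{_{\!\mbbR}}[f]$ and invoking the involutivity \eqref{involutivity}. The extra framing via the even/odd decomposition $\mscr{F}_{_{\!\mbbR}}[f] = \mcal{F}_{_{\!1}}[f] + \mcal{F}_{_{\!2}}[f]$ is harmless but not needed beyond what the paper already does.
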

\begin{proof}
Let $f$ be parity-even. For every $\vecty \in \mbbR^{n}$, the function $f(\vectx) \, \sin (\vecty \cdot \vectx)$ is parity-odd as a function of $\vectx\,$, so its integral in $d\vectx$ over $\mbbR^{n}$ vanishes. The integral representation \eqref{real Fourier transform} then gives
\begin{equation*}
\mscr{F}_{_{\!\mbbR}}[f](\vecty) = \frac{1}{\sqrt{(2 \pi)^{n}}} \int_{\mbbR^{n}} \! f(\vectx) \, \cos (\vecty \cdot \vectx) \, d\vectx \quad ,
\end{equation*}
and the right hand side is a parity-even function of $\vecty\,$. If $f$ is parity-odd, analogously we have
\begin{equation*}
\mscr{F}_{_{\!\mbbR}}[f](\vecty) = - \, \frac{1}{\sqrt{(2 \pi)^{n}}} \int_{\mbbR^{n}} \! f(\vectx) \, \sin (\vecty \cdot \vectx) \, d\vectx \quad ,
\end{equation*}
and the right hand side is a parity-odd function of $\vecty\,$. The ``only if'' part of the thesis follows from the property \eqref{involutivity}.
\end{proof}

We conclude that, if a real function has a def\-inite parity, then its parity coincides with that of its real Fourier transform. Note that this property is not specif\-ic to the real Fourier transform, since it is true also for the Fourier transform $\mcal{F}$. However, in the latter case there is a dif\-ference between the even and odd cases, since a parity-even $f$ has $\mcal{F}_{_{\!2}}[f] = 0\,$, while a parity-odd $f$ has $\mcal{F}_{_{\!1}}[f] = 0\,$. For what concerns the complex Fourier transform, the situation is similar: if a real function $f$ is parity-even, then
\begin{align} \label{real vs complex Fourier transf parity even}
\mscr{F}_{_{\!\mbbR}}[f] &= \Re \big[ \mscr{F}[f] \big] \quad , & \Im \big[ \mscr{F}[f] \big] &= 0 \quad ;
\end{align}
if a real function $f$ is parity-odd, then
\begin{align} \label{real vs complex Fourier transf parity odd}
\mscr{F}_{_{\!\mbbR}}[f] &= \Im \big[ \mscr{F}[f] \big] \quad , & \Re \big[ \mscr{F}[f] \big] &= 0 \quad .
\end{align}

\section{Unitarity, extensions and eigenfunctions}
\label{sec: unitarity, extensions and eigenfunctions}

We now discuss some properties of the real Fourier transform as a map between normed spaces and inner-product spaces. To lighten the notation somewhat, henceforth we denote with $L^{2}$ and $L^{1}$ respectively the spaces $L^{2}(\mbbR^{n}, \mbbR)$ and $L^{1}(\mbbR^{n}, \mbbR)$ of square-integrable and absolutely-integrable real functions (quotiented by the equivalence relation which identif\-ies functions which coincide almost everywhere). We also denote with $C_{_{0\!}}$ the space $C_{_{0}}(\mbbR^{n}, \mbbR)$ of real continuous functions which decay to zero at inf\-inity.

We recall that $L^{2}$ is a real Hilbert space when equipped with the inner product
\begin{equation*}
\blangle f , g \brangle = \int_{\mbbR^{n}} f(\vectx) \, g(\vectx) \, d\vectx \quad ,
\end{equation*}
while $L^{1}$ and $C_{_{0\!}}$ are real Banach spaces when equipped respectively with the norms
\begin{align*}
\norm{f}_{_{1\!}} &= \int_{\mbbR^{n}} \abs{f(\vectx)} \, d\vectx \quad , & \norm{f}_{_{\infty\!}} &= \sup_{\vectx \in \mbbR^{n}} \, \abs{f(\vectx)} \quad .
\end{align*}
We indicate with $\norm{ \, }_{_{2\!}}$ the norm induced by the inner product $\blangle \, ,  \brangle\,$.

\subsection{Unitarity}

It is well known \cite{Reed Simon 2} that the complex Fourier transform $\mscr{F}$, seen as a map
\begin{equation*}
\big( \mscr{S}(\mbbR^{n}, \mbbC) \, , \langle \, , \rangle_{_{2}} \big) \to \big( \mscr{S}(\mbbR^{n}, \mbbC) \, , \langle \, , \rangle_{_{2}} \big) \quad ,
\end{equation*}
preserves the inner product
\begin{equation*}
\langle f , g \rangle_{_{2}} = \int_{\mbbR^{n}} f^{\ast}(\vectx) \, g(\vectx) \, d\vectx \quad .
\end{equation*}
In terms of the Fourier transform $\mcal{F}$, this means that, for every couple of real functions $f , g \in \mscr{S}$, we have
\begin{equation*}
\int_{\mbbR^{n}} \Big( \mcal{F}_{_{\!1}}[f](\vectx)\, \mcal{F}_{_{\!1}}[g](\vectx) + \mcal{F}_{_{\!2}}[f](\vectx)\, \mcal{F}_{_{\!2}}[g](\vectx) \Big) \, d\vectx = \int_{\mbbR^{n}} f(\vectx)\, g(\vectx)\, d\vectx \quad .
\end{equation*}
This relation can be written in the form
\begin{equation}
\Blangle \mcal{F}[f] \, , \, \mcal{F}[g] \Brangle_{_{\!\!\oplus}} = \blangle f , g \brangle \quad ,
\end{equation}
by introducing the real inner product $\blangle \, , \brangle_{_{\!\oplus}} : \mscr{S}_{\!sa} \times \mscr{S}_{\!sa} \to \mbbR$ as follows
\begin{equation*}
\Blangle \big( f_{_{1}} , f_{_{2}} \big) , \big( g_{_{1}} , g_{_{2}} \big) \Brangle_{_{\!\!\oplus}} = \int_{\mbbR^{n}} \Big( f_{_{\!1\!}}(\vectx) \, g_{_{1\!}}(\vectx) + f_{_{\!2}}(\vectx) \, g_{_{2}}(\vectx) \Big) \, d\vectx \quad .
\end{equation*}
So the Fourier transform $\mcal{F}$, seen as a map $\big( \mscr{S} \, , \langle \, , \rangle \big) \to \big( \mscr{S}_{\!sa} \, , \langle \, , \rangle_{_{\!\oplus}} \big)\,$, preserves the inner product.

\smallskip
The main result in this connection is that the real Fourier transform $\mscr{F}_{_{\!\mbbR}}$ preserves the inner product $\langle \, , \rangle \, $:
\begin{proposition}[unitarity of the real Fourier transform] \label{prop: unitarity}
The real Fourier transform, seen as a map between real inner product spaces
\begin{equation*}
\mscrF_{_{\!\mbbR}} : \big( \mscr{S} , \langle \, , \rangle \big) \to \big( \mscr{S} , \langle \, , \rangle \big) \quad ,
\end{equation*}
is an inner-product-preserving (i.e., unitary) map. That is, for every $f , g \in \mscr{S}$ we have
\begin{equation} \label{real Fourier transf inner preserving}
\blangle \, \mscr{F}_{_{\!\mbbR}}[f]\, , \, \mscr{F}_{_{\!\mbbR}}[g] \, \brangle = \blangle f , g \brangle \quad .
\end{equation}
In particular, for every $f \in \mscrS$ we have
\begin{equation} \label{real Fourier transf norm preserving}
\bnorm{\mscr{F}_{_{\!\mbbR}}[f]}_{_{2\!}} = \norm{f}_{_{2\!}} \quad ,
\end{equation}
so $\mscr{F}_{_{\!\mbbR}}$ is an isometry.
\end{proposition}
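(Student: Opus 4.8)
The plan is to reduce the claim to the inner-product preservation of the couple-valued transform $\mcal{F}$ recalled just above, namely $\langle \mcal{F}[f], \mcal{F}[g] \rangle_{\oplus} = \langle f, g \rangle$. Since by definition $\mscr{F}_{_{\!\mbbR}} = S \circ \mcal{F}$, I would set $\mcal{F}[f] = (f_1, f_2)$ and $\mcal{F}[g] = (g_1, g_2)$, both elements of $\mscr{S}_{\!sa}$, so that $\mscr{F}_{_{\!\mbbR}}[f] = f_1 + f_2$ and $\mscr{F}_{_{\!\mbbR}}[g] = g_1 + g_2$. Expanding the left-hand side of \eqref{real Fourier transf inner preserving} by bilinearity of the inner product $\langle \, , \rangle$ then yields four integrals: the two diagonal ones $\int_{\mbbR^{n}} f_1 g_1 \, d\vectx$ and $\int_{\mbbR^{n}} f_2 g_2 \, d\vectx$, and the two cross ones $\int_{\mbbR^{n}} f_1 g_2 \, d\vectx$ and $\int_{\mbbR^{n}} f_2 g_1 \, d\vectx$.

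The crux of the argument is the vanishing of the cross terms. By the very definition of $\mscr{S}_{\!sa}$, the first components $f_1, g_1$ are parity-even and the second components $f_2, g_2$ are parity-odd; hence each cross term integrates the product of a parity-even and a parity-odd function, which is itself parity-odd, and its integral over $\mbbR^{n}$ therefore vanishes. This parity cancellation---which is exactly the reason the plain ``sum'' operator $S$ manages to preserve the inner product when restricted to the symmetric-antisymmetric subspace---is the only real content of the proof, and is where I expect the decisive step to lie.

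Once the cross terms are dropped, the two surviving diagonal integrals are precisely $\langle \mcal{F}[f], \mcal{F}[g] \rangle_{\oplus}$, so I would conclude $\langle \mscr{F}_{_{\!\mbbR}}[f], \mscr{F}_{_{\!\mbbR}}[g] \rangle = \langle \mcal{F}[f], \mcal{F}[g] \rangle_{\oplus} = \langle f, g \rangle$, which is \eqref{real Fourier transf inner preserving}. The norm identity \eqref{real Fourier transf norm preserving} then follows by putting $g = f$ and taking square roots, establishing that $\mscr{F}_{_{\!\mbbR}}$ is an isometry. No analytic subtleties arise beyond noting that all functions in play belong to the Schwartz space, so every integral converges absolutely and the splitting-and-cancellation manipulation is fully justified.
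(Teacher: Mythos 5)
Your proposal is correct and follows essentially the same route as the paper: the paper likewise reduces everything to showing that the sum operator $S$ preserves the inner product on $\mscr{S}_{\!sa}$, with the cross terms $\int f_{_{1}} g_{_{2}}$ and $\int f_{_{2}} g_{_{1}}$ vanishing because they are integrals of parity-odd functions, and then composes with the already-known unitarity of $\mcal{F}$. The only cosmetic difference is that you instantiate $(f_{_{1}},f_{_{2}})=\mcal{F}[f]$ from the start rather than stating the property of $S$ abstractly first.
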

\begin{proof}
Recall that, by def\-inition, $\mscr{F}_{_{\!\mbbR}} = S \circ \mcal{F}$. We prove that $S$, seen as a map $\big( \mscr{S}_{\!sa} \, , \langle \, , \rangle_{_{\!\oplus}} \big) \to \big( \mscr{S} \, , \langle \, , \rangle \big)\,$, preserves the inner product. To this aim, let $\big( f_{_{\!1}} , f_{_{2}} \big) \, , \big( g_{_{1}} , g_{_{2}} \big) \in \mscr{S}_{\!sa}\,$. We have
\begin{multline*}
\Blangle S \big[ \big( f_{_{\!1}} , f_{_{2}} \big) \big] \, , S \big[ \big( g_{_{1}} , g_{_{2}} \big) \big] \Brangle = \int_{\mbbR^{n}} \big( f_{_{\! 1\!}}(\vectx) + f_{_{\!2}}(\vectx) \big) \, \big( g_{_{1\!}}(\vectx) + g_{_{2\!}}(\vectx) \big) \, d\vectx = \\
= \Blangle \big( f_{_{\!1}} , f_{_{2}} \big) \, , \, \big( g_{_{1}} , g_{_{2}} \big) \Brangle_{_{\!\oplus}} + \int_{\mbbR^{n}} \Big( f_{_{\! 1\!}}(\vectx) \, g_{_{2\!}}(\vectx) + f_{_{2\!}}(\vectx) \, g_{_{1\!}}(\vectx) \Big) \, d\vectx \quad ,
\end{multline*}
and the integral in the second line vanishes owing to the fact that the products $f_{_{\! 1}} \, g_{_{2\!}}$ and $f_{_{2}} \, g_{_{1\!}}$ are parity-odd functions. Since $\mscr{F}_{_{\!\mbbR}}$ is the composition of inner-product-preserving maps, it is inner-product-preserving itself.
\end{proof}

It follows in particular that $\mscrF_{_{\!\mbbR}}$ is symmetric as a map between real inner product spaces:
\begin{corollary} \label{cor: symmetric}
The real Fourier transform $\mscr{F}_{_{\!\mbbR}}$, seen as a map between real inner product spaces $\big( \mscrS , \langle \, , \rangle \big) \to \big( \mscrS , \langle \, , \rangle \big)\,$, is symmetric. That is, for every $f , g \in \mscr{S}$ we have
\begin{equation} \label{real Fourier transf symmetric}
\blangle f \, , \, \mscr{F}_{_{\!\mbbR}}[g]\, \brangle = \blangle \mscr{F}_{_{\!\mbbR}}[f] \, , \, g \brangle \quad .
\end{equation}
\end{corollary}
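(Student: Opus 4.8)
The plan is to derive the symmetry relation \eqref{real Fourier transf symmetric} directly from the two properties already established for $\mscr{F}_{_{\!\mbbR}}$: the involutivity \eqref{involutivity} of Proposition \ref{prop: involutivity}, and the inner-product preservation \eqref{real Fourier transf inner preserving} of Proposition \ref{prop: unitarity}. The guiding observation is that, because $\mscr{F}_{_{\!\mbbR}}$ is its own inverse, every function can be rewritten as the real Fourier transform of its own real Fourier transform. This lets me reshape one side of \eqref{real Fourier transf symmetric} into an inner product of two genuine real Fourier transforms, which is precisely the situation in which unitarity can be invoked.

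Concretely, I would fix $f , g \in \mscr{S}$ and start from the right-hand side $\blangle \mscr{F}_{_{\!\mbbR}}[f] \, , g \brangle$ of \eqref{real Fourier transf symmetric}. Using the involutivity \eqref{involutivity} to replace $g$ by $\mscr{F}_{_{\!\mbbR}}\big[ \mscr{F}_{_{\!\mbbR}}[g] \big]$, one obtains
\[
\blangle \mscr{F}_{_{\!\mbbR}}[f] \, , g \brangle = \blangle \mscr{F}_{_{\!\mbbR}}[f] \, , \mscr{F}_{_{\!\mbbR}}\big[ \mscr{F}_{_{\!\mbbR}}[g] \big] \brangle \quad .
\]
The right-hand side is now the inner product of the real Fourier transforms of $f$ and of $\mscr{F}_{_{\!\mbbR}}[g]$, so applying \eqref{real Fourier transf inner preserving} to the pair $\big( f , \mscr{F}_{_{\!\mbbR}}[g] \big)$ yields
\[
\blangle \mscr{F}_{_{\!\mbbR}}[f] \, , \mscr{F}_{_{\!\mbbR}}\big[ \mscr{F}_{_{\!\mbbR}}[g] \big] \brangle = \blangle f \, , \mscr{F}_{_{\!\mbbR}}[g] \brangle \quad ,
\]
which is exactly \eqref{real Fourier transf symmetric}. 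The two displayed equalities form the entire argument.

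I expect no real obstacle here: the proof is a short two-step chain of identities, and conceptually it is just the familiar fact that a unitary involution on a real inner product space is automatically symmetric (its adjoint equals its inverse, which in turn equals the map itself). The only points requiring a moment's care are that unitarity must be applied with the correct arguments, namely $f$ and $\mscr{F}_{_{\!\mbbR}}[g]$ rather than $f$ and $g$, and that $\mscr{F}_{_{\!\mbbR}}[g] \in \mscr{S}$ so that both the inner product and Proposition \ref{prop: unitarity} are legitimately available; the latter is immediate since $\mscr{F}_{_{\!\mbbR}}$ maps $\mscr{S}$ into $\mscr{S}$ by Definition \ref{def: real Fourier transform and anti-transf}.
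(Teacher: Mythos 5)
Your argument is correct and is precisely the paper's own proof made explicit: the paper simply asserts that the claim ``follows trivially from the fact that $\mscr{F}_{_{\!\mbbR}}$ preserves the inner product and is an involution,'' and your two displayed equalities are exactly that deduction carried out. Nothing to add or fix.
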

\begin{proof}
The thesis follows trivially from the fact that $\mscr{F}_{_{\!\mbbR}}$ preserves the inner product and is an involution.
\end{proof}

\subsection{Extension theorems}

As we mentioned above, the complex Fourier transform, def\-ined as a map $\mscr{F}: \mscr{S}(\mbbR^{n}, \mbbC) \to \mscr{S}(\mbbR^{n}, \mbbC)\,$, can be extended by enlarging the domain and co-domain. The two relevant extensions are that to a map $L^{2}(\mbbR^{n}, \mbbC) \to L^{2}(\mbbR^{n}, \mbbC)\,$, and that to a map $L^{1}(\mbbR^{n}, \mbbC) \to C_{_{0}}(\mbbR^{n}, \mbbC)\,$.

These extensions rely on the continuous linear extension theorem (also known as the ``B.L.T.\ theorem'', \cite{Reed Simon 1}). Given two Banach spaces $(V, \norm{ \, }_{a})$ and $(W, \norm{ \, }_{b})\,$, the theorem asserts that a bounded linear map $B \to W$, where $B$ is dense in $V$, can be uniquely extended to a bounded linear map $V \to W$, preserving the operator norm. For what concerns the complex Fourier transform, the crucial observation is that $\mscr{S}(\mbbR^{n}, \mbbC)$ is both dense in $\big( L^{2}(\mbbR^{n}, \mbbC)\, , \norm{ \, }_{_{2}} \big)$ and in $\big( L^{1}(\mbbR^{n}, \mbbC)\, , \norm{ \, }_{_{1}} \big)\,$, and is a subspace of $C_{_{0}}(\mbbR^{n}, \mbbC)\,$.

\smallskip
Two analogous results hold for the real Fourier transform:
\begin{theorem}[Plancherel, real case] \label{th: Plancherel, real case}
The real Fourier transform $\mscr{F}_{_{\!\mbbR\!}}$ extends uniquely to a unitary map $(L^{2} , \norm{ \, }_{_{2}}) \to (L^{2} , \norm{ \, }_{_{2}})\,$. The extension is an involution and is self-adjoint.
\end{theorem}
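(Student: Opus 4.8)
The plan is to obtain the extension from the B.L.T.\ theorem and then to propagate the three structural properties already established on $\mscrS$ --- inner-product preservation, involutivity, and symmetry --- to the whole of $L^{2}$ by density. First I would assemble the two ingredients needed to invoke the B.L.T.\ theorem. One is that $\mscrS = \mscr{S}(\mbbR^{n}, \mbbR)$ is dense in $(L^{2}, \norm{\,}_{_{2}})$; this is the real-valued counterpart of the density of $\mscr{S}(\mbbR^{n}, \mbbC)$ in $L^{2}(\mbbR^{n}, \mbbC)$ recalled above, and can be taken over verbatim (or recovered by restricting the complex statement to real-valued functions). The other is that $\mscr{F}_{_{\!\mbbR}}$ is bounded into $L^{2}$, which is immediate from Proposition~\ref{prop: unitarity}: the norm identity \eqref{real Fourier transf norm preserving} shows that $\mscr{F}_{_{\!\mbbR}}$ has operator norm exactly $1$. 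The B.L.T.\ theorem then yields a unique bounded linear extension $(L^{2}, \norm{\,}_{_{2}}) \to (L^{2}, \norm{\,}_{_{2}})$, which I continue to denote by $\mscr{F}_{_{\!\mbbR}}$.

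Next I would extend the metric properties. Since $\bnorm{\mscr{F}_{_{\!\mbbR}}[f]}_{_{2}} = \norm{f}_{_{2}}$ holds on the dense set $\mscrS$, and both sides are continuous in $f$ (the extended operator and the norm being continuous), the identity persists for every $f \in L^{2}$, so the extension is an isometry. Inner-product preservation \eqref{real Fourier transf inner preserving} then follows most cleanly by real polarization, writing $\blangle f , g \brangle = \tfrac{1}{4}\big( \norm{f+g}_{_{2}}^{2} - \norm{f-g}_{_{2}}^{2} \big)$ and using linearity together with the norm identity; alternatively one extends the bilinear relation directly, using joint continuity of $\blangle\,,\brangle$ (via Cauchy--Schwarz) along approximating sequences in $\mscrS$.

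I would then extend the involution. The bounded operator $\mscr{F}_{_{\!\mbbR}} \circ \mscr{F}_{_{\!\mbbR}}$ agrees with $\textup{Id}$ on the dense subspace $\mscrS$ by \eqref{involutivity}; as both maps are continuous and coincide on a dense set, they coincide on all of $L^{2}$. Hence the extension is its own two-sided inverse, in particular bijective, and combined with inner-product preservation this shows it is unitary. Self-adjointness can be read off in two equivalent ways: either extend the symmetry relation \eqref{real Fourier transf symmetric} by density to conclude that the extension is a bounded, everywhere-defined symmetric operator and hence self-adjoint, or observe directly that a unitary involution $U$ satisfies $U^{\ast} = U^{-1} = U$.

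There is no deep obstacle here; the argument is in essence a bookkeeping exercise in extension by density, with the B.L.T.\ theorem doing the analytic work. The one point that genuinely requires care is the passage to the limit implicit in each density step: one must check that the maps involved --- $f \mapsto \bnorm{\mscr{F}_{_{\!\mbbR}}[f]}_{_{2}}$, the bilinear form $(f,g) \mapsto \blangle \mscr{F}_{_{\!\mbbR}}[f] , \mscr{F}_{_{\!\mbbR}}[g] \brangle$, and the composition $\mscr{F}_{_{\!\mbbR}} \circ \mscr{F}_{_{\!\mbbR}}$ --- are continuous on $L^{2}$, which they are since the inner product and composition are continuous and $\mscr{F}_{_{\!\mbbR}}$ is bounded. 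The sole external input is the real density of $\mscrS$ in $L^{2}$.
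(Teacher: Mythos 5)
Your argument is correct and follows essentially the same route as the paper, which likewise invokes the continuous linear extension (B.L.T.) theorem using the density of $\mscrS$ in $(L^{2},\norm{\,}_{_{2}})$ and the unit operator norm from Proposition~\ref{prop: unitarity}, and then deduces involutivity and self-adjointness of the extension from Proposition~\ref{prop: involutivity} and Corollary~\ref{cor: symmetric}. Your write-up merely makes explicit the density-and-continuity bookkeeping that the paper leaves to the reader by analogy with the complex case.
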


\begin{theorem}[Riemann-Lebesgue lemma, real case] \label{th: Riemann-Lebesgue, real case}
The real Fourier transform $\mscr{F}_{_{\!\mbbR\!}}$ extends uniquely to a bounded linear map $(L^{1} , \norm{ \, }_{_{1}}) \to (C_{_{0}} , \norm{ \, }_{_{\infty}})\,$.
\end{theorem}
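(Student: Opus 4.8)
The plan is to prove this exactly as one proves the complex Riemann--Lebesgue lemma, namely by invoking the continuous linear extension (B.L.T.) theorem quoted above. To this end I must check three ingredients: that $\mscr{S}$ is dense in $(L^{1}, \norm{\,}_{_{1}})$; that $\mscr{F}_{_{\!\mbbR}}$ sends $\mscr{S}$ into the Banach space $(C_{_{0}}, \norm{\,}_{_{\infty}})$; and that $\mscr{F}_{_{\!\mbbR}}$ is bounded as a map $(\mscr{S}, \norm{\,}_{_{1}}) \to (C_{_{0}}, \norm{\,}_{_{\infty}})$. The density of $\mscr{S}$ in $L^{1}$ is classical. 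The inclusion $\mscr{F}_{_{\!\mbbR}}[\mscr{S}] \subseteq C_{_{0}}$ is immediate from the isomorphism diagram established earlier: since $\mscr{F}_{_{\!\mbbR}} = S \circ \mcal{F}$ maps $\mscr{S}$ into $\mscr{S}$, and every Schwartz function is continuous and decays to zero at infinity, we have $\mscr{F}_{_{\!\mbbR}}[\mscr{S}] \subseteq \mscr{S} \subseteq C_{_{0}}$.

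The boundedness estimate is the only computation. Starting from the integral representation \eqref{real Fourier transform} and using the elementary inequality $\abs{\cos\th - \sin\th} \le \sqrt{2}$, for every $f \in \mscr{S}$ and every $\vecty \in \mbbR^{n}$ I would bound
\begin{equation*}
\babs{\mscr{F}_{_{\!\mbbR}}[f](\vecty)} \le \frac{1}{\sqrt{(2\pi)^{n}}} \int_{\mbbR^{n}} \abs{f(\vectx)} \, \babs{\cos(\vecty\cdot\vectx) - \sin(\vecty\cdot\vectx)} \, d\vectx \le \frac{\sqrt{2}}{\sqrt{(2\pi)^{n}}} \, \norm{f}_{_{1}} \quad ,
\end{equation*}
and taking the supremum over $\vecty$ gives $\bnorm{\mscr{F}_{_{\!\mbbR}}[f]}_{_{\infty}} \le \sqrt{2}\,(2\pi)^{-n/2}\,\norm{f}_{_{1}}$, so $\mscr{F}_{_{\!\mbbR}}$ is bounded.

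With these three facts in hand, the B.L.T.\ theorem furnishes a unique bounded linear extension $(L^{1}, \norm{\,}_{_{1}}) \to (C_{_{0}}, \norm{\,}_{_{\infty}})$ preserving the operator norm. I expect the only genuinely delicate point to be the verification that the range of the extension really lies in $C_{_{0}}$ --- that is, decay at infinity, which is the true content of the lemma --- and not merely in the larger space of bounded continuous functions. In the B.L.T.\ framework this is automatic: for $f \in L^{1}$ one picks $f_{k} \in \mscr{S}$ with $f_{k} \to f$ in $\norm{\,}_{_{1}}$, whence the boundedness estimate makes $\mscr{F}_{_{\!\mbbR}}[f_{k}]$ a Cauchy sequence in $\norm{\,}_{_{\infty}}$; since $C_{_{0}}$ is a closed subspace of the bounded continuous functions, and hence itself complete, the limit stays in $C_{_{0}}$.

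As an alternative route that sidesteps even this point, one can simply inherit the result from the complex Riemann--Lebesgue lemma. A direct computation of real and imaginary parts shows that, for real $f$, one has $\Re\big[\mscr{F}[f]\big] = \mcal{F}_{_{\!1}}[f]$ and $\Im\big[\mscr{F}[f]\big] = \mcal{F}_{_{\!2}}[f]$, so that $\mscr{F}_{_{\!\mbbR}}[f] = \Re\big[\mscr{F}[f]\big] + \Im\big[\mscr{F}[f]\big]$. Since $\mscr{F}[f] \in C_{_{0}}(\mbbR^{n},\mbbC)$ whenever $f \in L^{1}$, both its real and imaginary parts, and therefore their sum, belong to $C_{_{0}}$; boundedness then follows from that of $\mscr{F}$ together with the triangle inequality. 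Either way, the content of the theorem reduces to standard facts about $\mscr{S}$, $L^{1}$ and $C_{_{0}}$, with no essential difficulty beyond the decay-at-infinity bookkeeping.
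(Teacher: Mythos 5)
Your proposal is correct and follows essentially the same route as the paper: density of $\mscr{S}$ in $(L^{1}, \norm{\,}_{_{1}})$, the inclusion $\mscr{F}_{_{\!\mbbR}}[\mscr{S}] \subseteq \mscr{S} \subseteq C_{_{0}}$, the bound $\bnorm{\mscr{F}_{_{\!\mbbR}}[f]}_{_{\infty}} \leq \sqrt{2}\,(2\pi)^{-n/2}\norm{f}_{_{1}}$ obtained from $\abs{\cos\th - \sin\th} \leq \sqrt{2}$, and then the B.L.T.\ theorem. Your added remark on why the limit stays in $C_{_{0}}$ (completeness of $C_{_{0}}$ in the sup norm) is a detail the paper leaves implicit, but it is the same argument.
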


The proofs are completely analogous to the ones for the complex case (see, e.g., \cite{Reed Simon 2}). Indeed, the continuous linear extension theorem works alike for complex and real Banach spaces. Furthermore, also in the real case $\mscr{S}$ is both dense in $\big( L^{2}, \norm{ \, }_{_{2}} \big)$ and in $\big( L^{1}, \norm{ \, }_{_{1}} \big)\,$, and is a subspace of $C_{_{0\!}}\,$. For what concerns the extension to $L^{2}$, the continuous linear extension theorem applies because the proposition \ref{prop: unitarity} implies that $\mscr{F}_{_{\!\mbbR}}\,$, seen as a map $(\mscr{S}, \norm{ \, }_{_{2}}) \to (\mscr{S}, \norm{ \, }_{_{2}})\,$, is a bounded linear map with unit norm. For what concerns the extension to $L^{1}$, it is easy to check that the real Fourier transform, seen as a map $(\mscr{S}, \norm{ \, }_{_{1}}) \to (\mscr{S}, \norm{ \, }_{_{\infty}})\,$, satisf\-ies the bound
\begin{equation*}
\bnorm{\mscr{F}_{_{\!\mbbR}}[f]}_{_{\infty}} \leq \frac{\sqrt{2}}{\sqrt{(2 \pi)^{n}}} \,\, \norm{f}_{_{1}} \quad ,
\end{equation*}
so it is a bounded linear map.

The only dif\-ference with the complex case is that, for what concerns the theorem \ref{th: Plancherel, real case}, the extended map is also an involution, and is self-adjoint (while in the complex case $\mscr{F}^{-1} = \mscr{F}_{a}$ and $\mscr{F}^{\dag} = \mscr{F}_{a}$). These properties follow from the facts that $\mscr{F}_{_{\!\mbbR}} : \mscr{S} \to \mscr{S}$ is an involution (proposition \ref{prop: involutivity}) and symmetric (corollary \ref{cor: symmetric}). On the other hand, as in the complex case, the extension $L^{1} \to C_{_{0}}$ in not surjective.
 
\subsection{Eigenfunctions}

Let us temporarily restrict our considerations to the case $n = 1\,$. It is known \cite{Pinsky 2002} that the Hermite functions
\begin{equation}
\ps_{k}(x) = \frac{(-1)^{k}}{\sqrt{2^{k}\, k! \, \sqrt{\pi} \, }} \,\, e^{\frac{x^{2}}{2}} \, \frac{d^{k}}{dx^{k}} \, e^{- x^{2}} \quad ,
\end{equation}
where the index $k$ runs over $\mbbN\,$, satisfy the relation
\begin{equation}
\frac{1}{\sqrt{2 \pi}} \int_{\mbbR} e^{-ixy} \, \ps_{k}(x)\, dx = (-i)^{k}\, \ps_{k}(y) \quad ,
\end{equation}
and therefore are eigenfunctions of the complex Fourier transform:
\begin{equation}
\mscr{F} \big[ \ps_{k} \big] = (-i)^{k}\, \ps_{k} \quad .
\end{equation}
There are four eigenvalues, $1\,$, $-i\,$, $-1$ and $i\,$, and the four eigenspaces in $\mscr{S}(\mbbR\, , \mbbR)$ are inf\mbox{}initely degenerate. The family of Hermite functions then gets naturally partitioned into four sub-families, according to their eigenvalue. In particular, for $m \in \mbbN$ we have
\begin{align}
\mscrF \big[ \ps_{2m} \big] &= (-1)^{m}\, \ps_{2m} \quad , & \mscrF \big[ \ps_{2m + 1} \big] &= i \, (-1)^{m + 1}\, \ps_{2m + 1} \quad ,
\end{align}
so the eigenvalues of the Hermite functions have a four-fold periodicity as the index of the latter runs over the natural numbers.

\smallskip
For what concerns the real Fourier transform $\mscr{F}_{_{\!\mbbR}}\,$, it is useful to recall that the Hermite functions have def\-inite parity, and in particular those of even index are parity-even, while those of odd index are parity-odd. The relations \eqref{real vs complex Fourier transf parity even} and \eqref{real vs complex Fourier transf parity odd} then imply that the Hermite functions are eigenfunctions of the real Fourier transform $\mscr{F}_{_{\!\mbbR}}$ as well, with eigenvalues equal to $+1$ and $-1\,$. Indeed, for $m \in \mbbN$ we have
\begin{align}
\mscr{F}_{_{\!\mbbR}} \big[ \ps_{2m} \big] &= (-1)^{m}\, \ps_{2m} \quad , & \mscr{F}_{_{\!\mbbR}} \big[ \ps_{2m + 1} \big] &= (-1)^{m + 1}\, \ps_{2m + 1} \quad .
\end{align}
The family of Hermite functions in this case gets partitioned into two sub-families, according to their eigenvalue, which are again inf\-initely degenerate. Nevertheless, the eigenvalues of the Hermite functions, as eigenfunctions of $\mscr{F}_{_{\!\mbbR}}\,$, still have a four-fold periodicity as their index runs over the natural numbers, with the sign pattern $+ - - \, +$ repeating indef\mbox{}initely.

\section{Convolution and Fourier transform}
\label{sec: convolution and Fourier transform}

\subsection{The notion of convolution}

We recall the notion of convolution:
\begin{definition}[convolution] \label{def: convolution}
Let $f\, , g \in \mscr{S}$. We call \emph{convolution of $g$ by $f$} the function $f \ast g \in \mscr{S}$ def\mbox{}ined as follows
\begin{equation} \label{convolution Schwartz space}
(f \ast g) (\vectx) = \int_{\mbbR^{n}} f(\vectx - \vecty) \, g(\vecty) \, d\vecty \quad .
\end{equation}
\end{definition}
\noi In the complex case where $f\, , g \in \mscr{S}(\mbbR^{n}, \mbbC)\,$, the def\-inition of convolution is formally the same, in other words is given by \eqref{convolution Schwartz space} without any complex conjugation appearing. Of course, in that case the product inside the integral is a product of complex numbers.

The convolution is a binary operation. It is well-known that, both in the real and in the complex case, this operation is commutative, associative and distributes with respect to the point-wise addition between functions \cite{Reed Simon 2}. In particular, the commutative property implies that we can speak simply of convolution of two functions, without specifying which one convolutes which. In this section, for additional clarity we indicate with a central dot the point-wise product between functions (both for the real and the complex case).

\smallskip
The operation of convolution displays a noteworthy interplay with the operation of Fourier transformation. For what concerns the complex Fourier transform, the main result in this sense is the following:
\begin{proposition} \label{prop: complex Fourier transform of product and convolution}
Let $f , g \in \mscr{S}(\mbbR^{n}, \mbbC)\,$. Then
\begin{align}
\frac{1}{\sqrt{(2 \pi)^{n}}} \,\, \mscrF \big[ f \ast g \big] &= \mscrF \big[ f \big] \cdot \mscrF \big[ g \big] \quad , \label{complex Fourier transform of convolution} \\[2mm]
\mscrF \big[ f \cdot g \big] &= \frac{1}{\sqrt{(2 \pi)^{n}}} \,\, \mscrF \big[ f \big] \ast \mscrF \big[ g \big] \quad . \label{complex Fourier transform of product}
\end{align}
\end{proposition}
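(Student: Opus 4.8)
The plan is to prove both identities by direct computation from the integral representation \eqref{complex Fourier transform}, exchanging the order of integration via Fubini's theorem and performing an elementary change of variables. The two statements are essentially dual to one another: the first rests on a translation substitution, while the second is most efficiently obtained from the inversion theorem $\mscrF_a\circ\mscrF=\textup{Id}$.

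For the convolution formula \eqref{complex Fourier transform of convolution}, I would insert the definition \eqref{convolution Schwartz space} into \eqref{complex Fourier transform}, writing $\mscrF[f\ast g](\vecty)$ as the double integral of $f(\vectx-\vectz)\,g(\vectz)\,e^{-i\vecty\cdot\vectx}$ over $(\vectx,\vectz)\in\mbbR^{n}\times\mbbR^{n}$, up to the prefactor $1/\sqrt{(2\pi)^{n}}$. After exchanging the two integrations, at fixed $\vectz$ I would substitute $\vect{w}=\vectx-\vectz$ and split the exponential as $e^{-i\vecty\cdot\vectx}=e^{-i\vecty\cdot\vect{w}}\,e^{-i\vecty\cdot\vectz}$; the $\vect{w}$-integral then factors out as $\sqrt{(2\pi)^{n}}\,\mscrF[f](\vecty)$, independent of $\vectz$, and the remaining $\vectz$-integral of $g(\vectz)\,e^{-i\vecty\cdot\vectz}$ yields $\sqrt{(2\pi)^{n}}\,\mscrF[g](\vecty)$. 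Collecting the prefactors gives $\mscrF[f\ast g]=\sqrt{(2\pi)^{n}}\,\mscrF[f]\cdot\mscrF[g]$, equivalent to \eqref{complex Fourier transform of convolution}.

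For the product formula \eqref{complex Fourier transform of product}, I would start from $\mscrF[f\cdot g](\vecty)$ and replace one factor, say $g(\vectx)$, by its inverse representation $g(\vectx)=\frac{1}{\sqrt{(2\pi)^{n}}}\int_{\mbbR^{n}}\mscrF[g](\vectk)\,e^{i\vectk\cdot\vectx}\,d\vectk$, valid by the inversion theorem. Exchanging integrations, the inner $\vectx$-integral of $f(\vectx)\,e^{-i(\vecty-\vectk)\cdot\vectx}$ reproduces $\mscrF[f](\vecty-\vectk)$, so that $\mscrF[f\cdot g](\vecty)=\frac{1}{\sqrt{(2\pi)^{n}}}\int_{\mbbR^{n}}\mscrF[f](\vecty-\vectk)\,\mscrF[g](\vectk)\,d\vectk$, which is precisely $\frac{1}{\sqrt{(2\pi)^{n}}}(\mscrF[f]\ast\mscrF[g])(\vecty)$ by \eqref{convolution Schwartz space}. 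Alternatively, the same identity follows by applying \eqref{complex Fourier transform of convolution} to the pair $\mscrF_a[f],\mscrF_a[g]$ and invoking the inversion theorem together with the reflection relation $\mscrF_a[h](\vectx)=\mscrF[h](-\vectx)$.

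The only point requiring care is the justification of Fubini's theorem in each step, and this is where I would concentrate the rigor. Since $f$ and $g$ are Schwartz functions they belong to $L^{1}(\mbbR^{n},\mbbC)$, and in both computations the modulus of the integrand is bounded by $\abs{f(\vectx-\vectz)}\,\abs{g(\vectz)}$ (respectively $\abs{f(\vectx)}\,\abs{\mscrF[g](\vectk)}$), whose double integral factors into a product of finite $L^{1}$-norms by Tonelli's theorem; hence the integrand is absolutely integrable on $\mbbR^{n}\times\mbbR^{n}$ and the exchange of integrations is legitimate. The change of variables is merely a translation, so it is measure-preserving and introduces no Jacobian factor, and all objects involved are well-defined because $f\ast g$, $\mscrF[f]$ and $\mscrF[g]$ again lie in $\mscr{S}(\mbbR^{n},\mbbC)$. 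Beyond this integrability bookkeeping I do not expect any genuine obstacle.
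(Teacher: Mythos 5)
Your proof is correct: the Fubini/Tonelli justification, the translation substitution, and the prefactor bookkeeping all check out against the normalization in \eqref{complex Fourier transform}, and both routes you sketch for the product formula are valid. The paper itself offers no argument here --- its ``proof'' is a citation to Reed--Simon --- and your computation is precisely the standard one found in that reference, so there is nothing further to compare.
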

\begin{proof}
See \cite{Reed Simon 2}.
\end{proof}
\noi In words, these relations mean that, apart from a numerical factor, the complex Fourier transform turns convolutions into products and products into convolutions. Similar properties, dual to these, hold for the complex Fourier anti-transform.

\subsection{Convolution and real Fourier transform}

The corresponding relations for the real Fourier transform assume a more complicated form. Before stating the proposition, let us introduce a dedicated notation and establish some preliminary results.

Given a real function $f$, we indicate with $f_{c}$ its complexif\-ication, which is the complex function whose real part coincides with $f$ and whose imaginary part vanishes. It is then not dif\-f\-icult to check that the following relations hold:
\begin{align} \label{Lorena rel}
(f \cdot g)_{c} &= f_{c} \cdot g_{c} \quad , & (f \ast g)_{c} &= f_{c} \ast g_{c} \quad .
\end{align}
Let us note furthermore that, from the expressions \eqref{complex Fourier transform} and \eqref{real Fourier transform}, the real Fourier transform and the complex Fourier transform are linked by the relation
\begin{equation} \label{Violetta direct rel}
\mscr{F}_{_{\!\mbbR}}[f] = \Re\, \mscr{F}\big[ f_{c} \big] + \Im\, \mscr{F}\big[ f_{c} \big] \quad ,
\end{equation}
and inversely
\begin{subequations} \label{Violetta inverse rel}
\begin{align}
\Re\, \mscr{F}\big[ f_{c} \big] &= \frac{1}{2}\, \Big( \mscr{F}_{_{\!\mbbR}}[f] + \big(P \circ \mscr{F}_{_{\!\mbbR}}\big)[f] \Big) \quad , \\[2mm]
\Im\, \mscr{F}\big[ f_{c} \big] &= \frac{1}{2}\, \Big( \mscr{F}_{_{\!\mbbR}}[f] - \big(P \circ \mscr{F}_{_{\!\mbbR}}\big)[f] \Big)\,  \quad .
\end{align}
\end{subequations}
The symbol $P$ denotes the parity operator, which is def\-ined as follows
\begin{align*}
P &: \mscr{S} \to \mscr{S} \quad , & P[f](\vectx) &= f(-\vectx) \quad .
\end{align*}
We adopt a condensed notation according to which $P\! \mscr{F}_{_{\!\mbbR}}$ denotes the composition $P \circ \mscr{F}_{_{\!\mbbR}}\,$.

\smallskip
The proposition for the real Fourier transform, which is analogous to the proposition \ref{prop: complex Fourier transform of product and convolution}, reads:
\begin{proposition}[real Fourier transform and convolution] \label{prop: real Fourier transform of product and convolution}
Let $f , g \in \mscr{S}$. Then
\begin{multline} \label{real Fourier transform of convolution}
\frac{1}{\sqrt{(2 \pi)^{n}}} \,\, \mscrF_{_{\!\mbbR}} \big[ f \ast g \big] = \frac{1}{2}\, \bigg( \mscrF_{_{\!\mbbR}}[f] \cdot \mscrF_{_{\!\mbbR}}[g] + \mscrF_{_{\!\mbbR}}[f] \cdot P\! \mscrF_{_{\!\mbbR}}[g] \,\, + \\[2mm]
+ P\! \mscrF_{_{\!\mbbR}}[f] \cdot \mscrF_{_{\!\mbbR}}[g] - P\! \mscrF_{_{\!\mbbR}}[f] \cdot P\! \mscrF_{_{\!\mbbR}}[g] \bigg) \quad ,
\end{multline}
and
\begin{multline} \label{real Fourier transform of product}
\mscrF_{_{\!\mbbR}} \big[ f \cdot g \big] = \frac{1}{\sqrt{(2 \pi)^{n}}} \,\, \frac{1}{2}\, \bigg( \mscrF_{_{\!\mbbR}}[f] \ast \mscrF_{_{\!\mbbR}}[g] + \mscrF_{_{\!\mbbR}}[f] \ast P\! \mscrF_{_{\!\mbbR}}[g] \,\, + \\[2mm]
+ P\! \mscrF_{_{\!\mbbR}}[f] \ast \mscrF_{_{\!\mbbR}}[g] - P\! \mscrF_{_{\!\mbbR}}[f] \ast P\! \mscrF_{_{\!\mbbR}}[g] \bigg) \quad .
\end{multline}
\end{proposition}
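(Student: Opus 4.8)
The plan is to reduce both \eqref{real Fourier transform of convolution} and \eqref{real Fourier transform of product} to the complex identities of Proposition \ref{prop: complex Fourier transform of product and convolution}, using the bridge relations \eqref{Lorena rel}, \eqref{Violetta direct rel} and \eqref{Violetta inverse rel} as a dictionary between the real and the complex transforms. Throughout I would abbreviate $A = \Re\,\mscr{F}[f_{c}]$, $B = \Im\,\mscr{F}[f_{c}]$, $C = \Re\,\mscr{F}[g_{c}]$ and $D = \Im\,\mscr{F}[g_{c}]$, which are real Schwartz functions. In this notation the inverse relations \eqref{Violetta inverse rel} read $A = \tfrac{1}{2}\big(\mscr{F}_{_{\!\mbbR}}[f] + P\mscr{F}_{_{\!\mbbR}}[f]\big)$ and $B = \tfrac{1}{2}\big(\mscr{F}_{_{\!\mbbR}}[f] - P\mscr{F}_{_{\!\mbbR}}[f]\big)$, while by \eqref{Violetta direct rel} applied to $g$ one has $C + D = \mscr{F}_{_{\!\mbbR}}[g]$ and, again by \eqref{Violetta inverse rel}, $C - D = P\mscr{F}_{_{\!\mbbR}}[g]$.

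For the convolution identity I would first apply \eqref{Violetta direct rel} to $f \ast g$, obtaining $\mscr{F}_{_{\!\mbbR}}[f \ast g] = \Re\,\mscr{F}[(f\ast g)_{c}] + \Im\,\mscr{F}[(f\ast g)_{c}]$. The second relation in \eqref{Lorena rel} gives $(f\ast g)_{c} = f_{c} \ast g_{c}$, whence the complex convolution theorem \eqref{complex Fourier transform of convolution} yields $\mscr{F}[f_{c}\ast g_{c}] = \sqrt{(2\pi)^{n}}\,\mscr{F}[f_{c}]\cdot\mscr{F}[g_{c}]$. Writing $\mscr{F}[f_{c}] = A + iB$ and $\mscr{F}[g_{c}] = C + iD$ and expanding the pointwise product as $(AC - BD) + i(AD + BC)$, the sum of real and imaginary parts produces $\tfrac{1}{\sqrt{(2\pi)^{n}}}\,\mscr{F}_{_{\!\mbbR}}[f\ast g] = (AC - BD) + (AD + BC)$. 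Regrouping this as $A(C+D) + B(C-D)$ and substituting $C+D = \mscr{F}_{_{\!\mbbR}}[g]$, $C-D = P\mscr{F}_{_{\!\mbbR}}[g]$ together with the half-sum and half-difference expressions for $A$ and $B$ gives precisely the four-term right-hand side of \eqref{real Fourier transform of convolution}.

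For the product identity the argument is structurally identical: \eqref{Violetta direct rel} applied to $f\cdot g$, the first relation in \eqref{Lorena rel} $(f\cdot g)_{c} = f_{c}\cdot g_{c}$, and the complex product theorem \eqref{complex Fourier transform of product} give $\mscr{F}[(f\cdot g)_{c}] = \tfrac{1}{\sqrt{(2\pi)^{n}}}\,\mscr{F}[f_{c}]\ast\mscr{F}[g_{c}]$. The only change is that the bilinear operation is now convolution instead of the pointwise product, so the expansion reads $(A+iB)\ast(C+iD) = (A\ast C - B\ast D) + i(A\ast D + B\ast C)$, and the subsequent regrouping and substitution are word-for-word the same with $\cdot$ replaced by $\ast$, delivering \eqref{real Fourier transform of product}.

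There is no deep obstacle here; the entire content is bilinear bookkeeping organised by the dictionary \eqref{Violetta direct rel}--\eqref{Violetta inverse rel}. The one step that genuinely requires justification is the claim that taking real and imaginary parts commutes with the two bilinear operations as used above, i.e. that for real functions $A,B,C,D$ the real and imaginary parts of $(A+iB)\ast(C+iD)$ are respectively $A\ast C - B\ast D$ and $A\ast D + B\ast C$, and similarly for the pointwise product. This rests on the $\mbbC$-bilinearity of convolution and on the elementary fact that the convolution, and the product, of two real Schwartz functions is again a real Schwartz function; once this is granted, both identities follow by the direct substitutions indicated.
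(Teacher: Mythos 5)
Your proposal is correct and follows essentially the same route as the paper's own proof in Appendix~\ref{app: proof}: reduce to Proposition~\ref{prop: complex Fourier transform of product and convolution} via the dictionary \eqref{Lorena rel}--\eqref{Violetta inverse rel}, expand real and imaginary parts of the bilinear operation, and substitute back. Your regrouping $A(C+D)+B(C-D)$ is a tidy way of organising the ``fair number of straightforward simplif\-ications'' the paper leaves implicit, but it is not a different argument.
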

\begin{proof}
The idea of the proof is to express the real Fourier transform in terms of the complex one, by using the relations \eqref{Lorena rel}--\eqref{Violetta inverse rel}, and resort to the proposition \ref{prop: complex Fourier transform of product and convolution}. The proof is elementary and straightforward but, being a bit cumbersome, we conf\-ine it to the appendix \ref{app: proof}.
\end{proof}

\subsection{Comparison and spherical symmetry}

It is worthwhile to comment on the dif\mbox{}ference between the relations \eqref{complex Fourier transform of convolution}--\eqref{complex Fourier transform of product} and \eqref{real Fourier transform of convolution}--\eqref{real Fourier transform of product}, which correspond respectively to the complex and real def\mbox{}initions of Fourier transform. On the face of it, the relations \eqref{complex Fourier transform of convolution}--\eqref{complex Fourier transform of product} are much simpler than the relations \eqref{real Fourier transform of convolution}--\eqref{real Fourier transform of product}, and this may be regarded as a motivation to use the complex def\-inition of Fourier transform even when dealing with real functions.

Such an argument would be, however, misleading. A fair way to compare the two situations is to express the complex Fourier transform in terms of its real components, thereby comparing objects of the same ``degree of complexity''. This leads to using the Fourier transform $\mcal{F}$ as a mean of comparison to $\mscr{F}_{_{\!\mbbR}}\,$. Expressing the relations \eqref{complex Fourier transform of convolution}--\eqref{complex Fourier transform of product} in terms of the real and imaginary parts of the complex Fourier transform, we get
\begin{subequations} \label{naive real Fourier transform of convolution}
\begin{align}
\frac{1}{\sqrt{(2 \pi)^{n}}} \,\, \mcal{F}_{_{\!1\!}}\big[ f \ast g \big] &= \mcal{F}_{_{\!1\!}}\big[ f \big] \cdot \mcal{F}_{_{\!1\!}}\big[ g \big] - \mcal{F}_{_{2\!}}\big[ f \big] \cdot \mcal{F}_{_{2\!}}\big[ g \big] \quad , \\[2mm]
\frac{1}{\sqrt{(2 \pi)^{n}}} \,\, \mcal{F}_{_{2\!}}\big[ f \ast g \big] &= \mcal{F}_{_{\!1\!}}\big[ f \big] \cdot \mcal{F}_{_{2\!}}\big[ g \big] + \mcal{F}_{_{2\!}}\big[ f \big] \cdot \mcal{F}_{_{\!1\!}}\big[ g \big] \quad ,
\end{align}
\end{subequations}
and
\begin{subequations} \label{naive real Fourier transform of product}
\begin{align}
\mcal{F}_{_{\!1\!}}\big[ f \cdot g \big] &= \frac{1}{\sqrt{(2 \pi)^{n}}} \,\, \Big( \mcal{F}_{_{\!1\!}}\big[ f \big] \ast \mcal{F}_{_{\!1\!}}\big[ g \big] - \mcal{F}_{_{2\!}}\big[ f \big] \ast \mcal{F}_{_{2\!}}\big[ g \big] \Big) \quad , \\[2mm]
\mcal{F}_{_{2\!}}\big[ f \cdot g \big] &= \frac{1}{\sqrt{(2 \pi)^{n}}} \,\, \Big( \mcal{F}_{_{\!1\!}}\big[ f \big] \ast \mcal{F}_{_{2\!}}\big[ g \big] + \mcal{F}_{_{2\!}}\big[ f \big] \ast \mcal{F}_{_{\!1\!}}\big[ g \big] \Big) \quad .
\end{align}
\end{subequations}
It is highly questionable that the relations \eqref{naive real Fourier transform of convolution}--\eqref{naive real Fourier transform of product} are simpler than the relations \eqref{real Fourier transform of convolution}--\eqref{real Fourier transform of product}. The former have only two addends on the right hand side, but have twice the number of equations. Moreover, the right hand sides of \eqref{naive real Fourier transform of convolution}--\eqref{naive real Fourier transform of product} display four independent quantities ($\mcal{F}_{_{\!1}}[f]\,$, $\mcal{F}_{_{2}}[g]\,$, $\mcal{F}_{_{\!1}}[f]$ and $\mcal{F}_{_{2}}[g]$) while the right hand sides of \eqref{real Fourier transform of convolution}--\eqref{real Fourier transform of product} in some sense display only two, since $P\! \mscrF_{_{\!\mbbR}}[f]$ and $P\! \mscrF_{_{\!\mbbR}}[g]$ can be obtained from $\mscrF_{_{\!\mbbR}}[f]$ and $\mscrF_{_{\!\mbbR}}[g]$ by a parity transformation (although, linearly speaking, they are independent).

\smallskip
Furthermore, a signif\-icant simplif\-ication in the relations \eqref{real Fourier transform of convolution}--\eqref{real Fourier transform of product} takes place when (at least) one of the two real functions is parity-even. To prove this we need a little lemma:
\begin{lemma}
The real Fourier transformation $\mscr{F}_{_{\!\mbbR}}$ and the parity operator $P$, as operators on $\mscr{S}$, commute.
\end{lemma}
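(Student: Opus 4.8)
The plan is to reduce the claim to the parity behaviour already established in Proposition~\ref{prop: parity properties}. Since both $\mscr{F}_{_{\!\mbbR}}$ and $P$ are linear, and since every $f \in \mscrS$ splits uniquely as $f = f_{+} + f_{-}$ with $f_{+}$ parity-even and $f_{-}$ parity-odd (this is exactly the decomposition effected by the operator $D$ in \eqref{decomposition operator}, whose two components are respectively even and odd), it suffices to verify the identity $P \circ \mscr{F}_{_{\!\mbbR}} = \mscr{F}_{_{\!\mbbR}} \circ P$ separately on the two parity subspaces.

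First I would treat the even case. If $f$ is parity-even then $P[f] = f$, so $\mscr{F}_{_{\!\mbbR}}[P f] = \mscr{F}_{_{\!\mbbR}}[f]$; on the other hand Proposition~\ref{prop: parity properties} tells us that $\mscr{F}_{_{\!\mbbR}}[f]$ is again parity-even, whence $P \mscr{F}_{_{\!\mbbR}}[f] = \mscr{F}_{_{\!\mbbR}}[f]$ as well, and the two sides coincide. The odd case is entirely analogous: if $f$ is parity-odd then $P[f] = -f$ and $\mscr{F}_{_{\!\mbbR}}[f]$ is parity-odd, so both $\mscr{F}_{_{\!\mbbR}}[P f]$ and $P \mscr{F}_{_{\!\mbbR}}[f]$ equal $-\mscr{F}_{_{\!\mbbR}}[f]$. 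Linearity then extends the equality $P \circ \mscr{F}_{_{\!\mbbR}} = \mscr{F}_{_{\!\mbbR}} \circ P$ from the two parity subspaces to all of $\mscrS$.

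Alternatively, one could argue directly from the integral representation \eqref{real Fourier transform}: computing $(\mscr{F}_{_{\!\mbbR}} \circ P)[f](\vecty)$ and performing the change of variables $\vectx \to -\vectx$ turns the kernel $\cos(\vecty \cdot \vectx) - \sin(\vecty \cdot \vectx)$ into $\cos(\vecty \cdot \vectx) + \sin(\vecty \cdot \vectx)$, which is precisely the kernel one obtains for $(P \circ \mscr{F}_{_{\!\mbbR}})[f](\vecty) = \mscr{F}_{_{\!\mbbR}}[f](-\vecty)$ from the evenness of cosine and the oddness of sine. I do not expect any genuine obstacle here: both routes are mechanical, the only points requiring a little care being the bookkeeping of signs and, in the direct route, the harmless validity of the change of variables on $\mscrS$.
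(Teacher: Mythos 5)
Your proposal is correct, and your primary argument takes a genuinely different route from the paper's. The paper proves the lemma by a one-line direct computation: writing out $\big( \mscr{F}_{_{\!\mbbR}} \circ P \big)[f](\vecty)$ from the integral representation \eqref{real Fourier transform} and changing variables $\vectx \to -\vectx$ to obtain $\mscr{F}_{_{\!\mbbR}}[f](-\vecty)$ --- this is precisely the ``alternative'' you sketch in your last paragraph, and your description of how the kernel transforms is accurate. Your main argument instead observes that the even and odd functions are the $\pm 1$ eigenspaces of $P$, that they span $\mscrS$ (via the decomposition $D$), and that Proposition~\ref{prop: parity properties} says $\mscr{F}_{_{\!\mbbR}}$ preserves each eigenspace; commutation on each eigenspace is then immediate and extends by linearity. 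This is logically sound and non-circular, since only the ``if'' direction of Proposition~\ref{prop: parity properties} is needed and that direction is proved before the lemma without appeal to it. What the decomposition route buys is a structural statement --- any linear operator preserving the two parity subspaces commutes with $P$ --- at the cost of routing through an earlier proposition; what the paper's computation buys is self-containedness, since it needs nothing beyond the integral representation and a change of variables. Either is acceptable here.
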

\begin{proof}
Let $f \in \mscr{S}$. By explicit evaluation we have
\begin{multline*}
\big( \mscr{F}_{_{\!\mbbR}} \circ P \big)[f](\vecty) = \frac{1}{\sqrt{(2 \pi)^{n}}} \int_{\mbbR^{n}} \! f(-\vectx) \, \Big( \cos (\vecty \cdot \vectx) - \sin (\vecty \cdot \vectx) \Big) \, d\vectx = \\
= \frac{1}{\sqrt{(2 \pi)^{n}}} \int_{\mbbR^{n}} \! f(\vectz) \, \Big( \cos(- \vecty \cdot \vectz) - \sin( - \vecty \cdot \vectz) \Big) \, d\vectz = \mscr{F}_{_{\!\mbbR}}[f](-\vecty) \quad ,
\end{multline*}
where we changed integration variables $\vectx \to \vectz = - \vectx$ in passing from the f\-irst to the second line.
\end{proof}
We then arrive at the pleasing result:
\begin{corollary} \label{cor: real Fourier transform of product and convolution}
Let $f , g \in \mscr{S}$, and at least one of the two functions be parity-even. Then
\begin{equation} \label{real Fourier transform of convolution special}
\frac{1}{\sqrt{(2 \pi)^{n}}} \,\, \mscrF_{_{\!\mbbR}}\big[ f \ast g \big] = \mscrF_{_{\!\mbbR}}[f] \cdot \mscrF_{_{\!\mbbR}}[g] \quad ,
\end{equation}
and
\begin{equation} \label{real Fourier transform of product special}
\mscrF_{_{\!\mbbR}}\big[ f \cdot g \big] = \frac{1}{\sqrt{(2 \pi)^{n}}} \,\, \mscrF_{_{\!\mbbR}}[f] \ast \mscrF_{_{\!\mbbR}}[g] \quad .
\end{equation}
\end{corollary}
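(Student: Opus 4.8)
The plan is to obtain the simplified relations \eqref{real Fourier transform of convolution special}--\eqref{real Fourier transform of product special} by collapsing the general formulas of Proposition \ref{prop: real Fourier transform of product and convolution}, using the parity hypothesis to kill the unwanted cross terms. The only fact I need is that the real Fourier transform of a parity-even function is again parity-even: if $f$ is parity-even then $Pf = f$, so by the preceding lemma $P\! \mscr{F}_{_{\!\mbbR}}[f] = \mscr{F}_{_{\!\mbbR}}[Pf] = \mscr{F}_{_{\!\mbbR}}[f]$ (this is also immediate from Proposition \ref{prop: parity properties}).

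First I would reduce to the case in which the parity-even function is $f$. This is harmless because both $\ast$ and $\cdot$ are commutative and, as one checks directly, the right hand sides of \eqref{real Fourier transform of convolution} and \eqref{real Fourier transform of product} are invariant under the exchange $f \leftrightarrow g$: such a swap leaves the first and fourth addends fixed and interchanges the second and third. Hence it suffices to treat the case $f$ parity-even.

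Next, assuming $f$ parity-even and substituting $P\! \mscr{F}_{_{\!\mbbR}}[f] = \mscr{F}_{_{\!\mbbR}}[f]$ into the four addends of \eqref{real Fourier transform of convolution}, the third addend $P\! \mscr{F}_{_{\!\mbbR}}[f] \cdot \mscr{F}_{_{\!\mbbR}}[g]$ becomes equal to the first, $\mscr{F}_{_{\!\mbbR}}[f] \cdot \mscr{F}_{_{\!\mbbR}}[g]$, while the fourth addend $-P\! \mscr{F}_{_{\!\mbbR}}[f] \cdot P\! \mscr{F}_{_{\!\mbbR}}[g]$ becomes $-\mscr{F}_{_{\!\mbbR}}[f] \cdot P\! \mscr{F}_{_{\!\mbbR}}[g]$ and cancels the second addend. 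The two surviving equal terms combine with the prefactor $\frac{1}{2}$ to give exactly $\mscr{F}_{_{\!\mbbR}}[f] \cdot \mscr{F}_{_{\!\mbbR}}[g]$, which is \eqref{real Fourier transform of convolution special}. Repeating the identical cancellation on \eqref{real Fourier transform of product}, with $\ast$ replacing $\cdot$, yields \eqref{real Fourier transform of product special}.

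There is no genuine obstacle here: once the identity $P\! \mscr{F}_{_{\!\mbbR}}[f] = \mscr{F}_{_{\!\mbbR}}[f]$ is in hand, the corollary is a one-line algebraic simplification of the proposition. The only point deserving a moment of attention is the reduction to $f$ parity-even, i.e.\ verifying the $f \leftrightarrow g$ symmetry of the right hand sides; alternatively, one may bypass the reduction and run the very same cancellation in the subcase $g$ parity-even, where instead $P\! \mscr{F}_{_{\!\mbbR}}[g] = \mscr{F}_{_{\!\mbbR}}[g]$ forces the third and fourth addends to cancel and the first two to coincide.
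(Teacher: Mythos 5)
Your proof is correct and follows essentially the same route as the paper, which likewise deduces the corollary from Proposition \ref{prop: real Fourier transform of product and convolution} together with the lemma that $\mscr{F}_{_{\!\mbbR}}$ and $P$ commute (hence $P\!\mscr{F}_{_{\!\mbbR}}[f]=\mscr{F}_{_{\!\mbbR}}[f]$ for parity-even $f$). You merely spell out the term-by-term cancellation and the $f\leftrightarrow g$ symmetry that the paper leaves implicit.
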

\begin{proof}
The thesis follows trivially from the relations \eqref{real Fourier transform of convolution} and \eqref{real Fourier transform of product}, taking into account that $\mscrF_{_{\!\mbbR}}$ and $P$ commute.
\end{proof}

For example, if one of the functions is spherically symmetric (think of the convolution of a function by a spherically symmetric window function), the relations \eqref{real Fourier transform of convolution special}--\eqref{real Fourier transform of product special} hold.

\section{Conclusions}
\label{sec: conclusions}

In this article, we proposed a novel def\-inition of Fourier transform of a real function, with the aim of avoiding the downside of the usual def\-inition of producing a transformed function with two real components.

We achieved this aim by introducing the notion of ``real Fourier transform'', and we showed that it enjoys the good properties of the usual complex Fourier transform. In particular the inversion theorem holds, the transformation preserves the parity of a function, and preserves the $L^{2}$ inner product. We discussed the extension of the transformation to a map $L^{2} \to L^{2}$ and to a map $L^{1} \to C_{_{0\!}}\,$, and also the interplay between the real Fourier transform and the operation of convolution.

Some surprises were encountered: the distinction between transform and anti-transform is no longer necessary, because the novel transform is an involution. Also, a pleasing simplif\-ication in the formulas for the real Fourier transform of convolutions and products arises, whenever (at least) one of the two functions is parity-even.

\bigskip

\section*{Acknowledgments}

The author acknowledges partial f\mbox{}inancial support, during an early stage of this work, from the Funda\c{c}\~{a}o de Amparo \`{a} Pesquisa do Estado do Rio de Janeiro (FAPERJ, Brazil) under the Programa de Apoio \`{a} Doc\^{e}ncia (PAPD) program.

\appendix

\section{Proof of the proposition \ref{prop: real Fourier transform of product and convolution}}
\label{app: proof}

We describe in some detail the proof of the proposition \ref{prop: real Fourier transform of product and convolution}. We need to prove that, for every $f , g \in \mscr{S}$, we have
\begin{multline} \label{app real Fourier transform of convolution}
\frac{1}{\sqrt{(2 \pi)^{n}}} \,\, \mscrF_{_{\!\mbbR}} \big[ f \ast g \big] = \frac{1}{2}\, \bigg( \mscrF_{_{\!\mbbR}}[f] \cdot \mscrF_{_{\!\mbbR}}[g] + \mscrF_{_{\!\mbbR}}[f] \cdot P\! \mscrF_{_{\!\mbbR}}[g] \,\, + \\[2mm]
+ P\! \mscrF_{_{\!\mbbR}}[f] \cdot \mscrF_{_{\!\mbbR}}[g] - P\! \mscrF_{_{\!\mbbR}}[f] \cdot P\! \mscrF_{_{\!\mbbR}}[g] \bigg) \quad ,
\end{multline}
and
\begin{multline} \label{app real Fourier transform of product}
\mscrF_{_{\!\mbbR}} \big[ f \cdot g \big] = \frac{1}{\sqrt{(2 \pi)^{n}}} \,\, \frac{1}{2}\, \bigg( \mscrF_{_{\!\mbbR}}[f] \ast \mscrF_{_{\!\mbbR}}[g] + \mscrF_{_{\!\mbbR}}[f] \ast P\! \mscrF_{_{\!\mbbR}}[g] \,\, + \\[2mm]
+ P\! \mscrF_{_{\!\mbbR}}[f] \ast \mscrF_{_{\!\mbbR}}[g] - P\! \mscrF_{_{\!\mbbR}}[f] \ast P\! \mscrF_{_{\!\mbbR}}[g] \bigg) \quad .
\end{multline}
\begin{proof}
As we mentioned in the main text, the idea of the proof is to resort to the proposition \ref{prop: complex Fourier transform of product and convolution}, by using the relations \eqref{Lorena rel}--\eqref{Violetta inverse rel}. Let us consider f\-irst the relation \eqref{app real Fourier transform of convolution}. Using the relation \eqref{Violetta direct rel}, the second of the relations \eqref{Lorena rel} and the relation \eqref{complex Fourier transform of convolution}, we can express the quantity $\mscrF_{_{\!\mbbR}}[f \ast g]$ as follows:
\begin{multline} \label{Camila}
\mscrF_{_{\!\mbbR}} \big[ f \ast g \big] = \Re\, \mscr{F}\big[ (f \ast g)_{c} \big] + \Im\, \mscr{F} \big[ (f \ast g)_{c} \big] = \Re\, \mscr{F}\big[ f_{c} \ast g_{c} \big] + \Im\, \mscr{F} \big[ f_{c} \ast g_{c} \big] = \\[2mm]
= \, \sqrt{(2 \pi)^{n}} \,\, \bigg[ \, \Re\, \Big( \mscrF \big[ f_{c} \big] \cdot \mscrF \big[ g_{c} \big] \Big) + \Im\, \Big( \mscrF \big[ f_{c} \big] \cdot \mscrF \big[ g_{c} \big] \Big) \bigg] \quad .
\end{multline}
Noting that
\begin{align*}
\Re\, \Big( \mscrF \big[ f_{c} \big] \cdot \mscrF \big[ g_{c} \big] \Big) &= \Re\, \mscrF \big[ f_{c} \big] \cdot \Re\, \mscrF \big[ g_{c} \big] - \Im\, \mscrF \big[ f_{c} \big] \cdot \Im\, \mscrF \big[ g_{c} \big] \quad , \\[2mm]
\Im\, \Big( \mscrF \big[ f_{c} \big] \cdot \mscrF \big[ g_{c} \big] \Big) &= \Re\, \mscrF \big[ f_{c} \big] \cdot \Im\, \mscrF \big[ g_{c} \big] + \Im\, \mscrF \big[ f_{c} \big] \cdot \Re\, \mscrF \big[ g_{c} \big] \quad ,
\end{align*}
and expressing $\Re\, \mscrF \big[ f_{c} \big]\,$, $\Im\, \mscrF \big[ f_{c} \big]\,$, $\Re\, \mscrF \big[ g_{c} \big]$ and $\Im\, \mscrF \big[ g_{c} \big]$ via the relations \eqref{Violetta inverse rel}, the relation \eqref{Camila} leads to an expression for $\mscrF_{_{\!\mbbR}} \big[ f \ast g \big]$ in terms of $\mscrF_{_{\!\mbbR}}[f]\,$, $P\!\mscrF_{_{\!\mbbR}}[f]\,$, $\mscrF_{_{\!\mbbR}}[g]$ and $P\!\mscrF_{_{\!\mbbR}}[g]\,$. After a fair number of straightforward simplif\-ications, the relation \eqref{app real Fourier transform of convolution} is obtained.

The proof of the relation \eqref{app real Fourier transform of product} is similar in spirit. Using the relation \eqref{Violetta direct rel}, the f\-irst of the relations \eqref{Lorena rel} and the relation \eqref{complex Fourier transform of product}, we can express the quantity $\mscrF_{_{\!\mbbR}}[f \cdot g]$ as follows
\begin{multline} \label{Camila 2}
\mscrF_{_{\!\mbbR}} \big[ f \cdot g \big] = \Re\, \mscr{F}\big[ (f \cdot g)_{c} \big] + \Im\, \mscr{F} \big[ (f \cdot g)_{c} \big] = \Re\, \mscr{F}\big[ f_{c} \cdot g_{c} \big] + \Im\, \mscr{F} \big[ f_{c} \cdot g_{c} \big] = \\[2mm]
= \frac{1}{\sqrt{(2 \pi)^{n}}} \,\, \Re\, \Big( \mscrF \big[ f_{c} \big] \ast \mscrF \big[ g_{c} \big] \Big) + \frac{1}{\sqrt{(2 \pi)^{n}}} \,\, \Im\, \Big( \mscrF \big[ f_{c} \big] \ast \mscrF \big[ g_{c} \big] \Big) \quad .
\end{multline}
Noting that
\begin{align*}
\Re\, \Big( \mscrF \big[ f_{c} \big] \ast \mscrF \big[ g_{c} \big] \Big) &= \Re\, \mscrF \big[ f_{c} \big] \ast \Re\, \mscrF \big[ g_{c} \big] - \Im\, \mscrF \big[ f_{c} \big] \ast \Im\, \mscrF \big[ g_{c} \big] \quad , \\[2mm]
\Im\, \Big( \mscrF \big[ f_{c} \big] \ast \mscrF \big[ g_{c} \big] \Big) &= \Re\, \mscrF \big[ f_{c} \big] \ast \Im\, \mscrF \big[ g_{c} \big] + \Im\, \mscrF \big[ f_{c} \big] \ast \Re\, \mscrF \big[ g_{c} \big] \quad ,
\end{align*}
and again expressing $\Re\, \mscrF \big[ f_{c} \big]\,$, $\Im\, \mscrF \big[ f_{c} \big]\,$, $\Re\, \mscrF \big[ g_{c} \big]$ and $\Im\, \mscrF \big[ g_{c} \big]$ via the relations \eqref{Violetta inverse rel}, the relation \eqref{Camila 2} leads to an expression for $\mscrF_{_{\!\mbbR}} \big[ f \cdot g \big]$ in terms of $\mscrF_{_{\!\mbbR}}[f]\,$, $P\!\mscrF_{_{\!\mbbR}}[f]\,$, $\mscrF_{_{\!\mbbR}}[g]$ and $P\!\mscrF_{_{\!\mbbR}}[g]\,$. Also in this case a fair number of straightforward simplif\-ications lead to the relation \eqref{app real Fourier transform of product}.
\end{proof}

\section{Modes and Fourier expansions}
\label{app: modes}
 
Let us consider a real function $f$ for which the Fourier anti-transform can be expressed by an integral representation. The def\-initions \eqref{naive real Fourier antitransform} and \eqref{real Fourier transf and anti-transf}, taking into account the proposition \ref{prop: involutivity}, can be understood as providing expansions of $f$ over continuous sets of modes
\begin{equation}  \label{naive real Fourier expansion}
f(\vectx) = \frac{1}{\sqrt{(2 \pi)^{n}}} \int_{\mbbR^{n}} \! \Big( \ti{f}_{_{1\!}}(\vecty) \, \cos_{\vecty}(\vectx) - \ti{f}_{_{2\!}}(\vecty) \, \sin_{\vecty}(\vectx) \Big) \, d\vecty \quad ,
\end{equation}
and
\begin{equation} \label{real Fourier expansion}
f(\vectx) = \frac{1}{\sqrt{(2 \pi)^{n}}} \int_{\mbbR^{n}} \! \ti{f}(\vecty) \, \s_{\vecty}(\vectx) \, d\vecty \quad ,
\end{equation}
where we introduced the modes
\begin{align*}
\cos_{\vecty}(\vectx) &= \cos (\vecty \cdot \vectx) \quad , & \sin_{\vecty}(\vectx) &= \sin (\vecty \cdot \vectx) \quad , \\[2mm]
\s_{\vecty}(\vectx) &= \cos (\vecty \cdot \vectx) - \sin (\vecty \cdot \vectx) \quad .
\end{align*}
From this point of view, $\cos_{\vecty}\,$, $\sin_{\vecty}$ and $\s_{\vecty}$ are to be understood as indexed families of functions $\mbbR^{n} \to \mbbR\,$, where $\vecty \in \mbbR^{n}$ is the index, while $\ti{f}_{_{1\!}}\,$, $\ti{f}_{_{2\!}}$ and $\ti{f}$ play the role of ``amplitude'' functions. The terminology ``continuous set of modes'' is somewhat improper, because the modes do not belong to the same functional space as the function itself, nevertheless it is standard. 

It is worthwhile to comment brief\-ly about the dif\-ferences between the two expansions, and especially about the fact that $f$ does not determine the couple $(\ti{f}_{_{1\!}}\, , \ti{f}_{_{2}})$ uniquely, while it does determine $\ti{f}$ uniquely. Of course, ultimately this depends on the conditions required to invert the Fourier anti-transform, but here we want to look at this fact from the point of view of the properties of the families of modes.

\smallskip
To facilitate the discussion, we refer to $\cos_{\vecty}$ and $\sin_{\vecty}$ as the ``sinusoidal modes'', and to $\s_{\vecty}$ as the ``sigma'' modes. So, \eqref{naive real Fourier expansion} can be seen as an expansion over the sinusoidal modes, while \eqref{real Fourier expansion} can be seen as an expansion over the sigma modes. It is also useful to introduce an equivalence relation $\sim$ between vectors in $\mbbR^{n}$ such that 
\begin{equation} \label{equivalence relation}
\vecty \sim \vectq \quad \text{if\mbox{}f} \quad \big[ \, \vecty = \vectq \,\,\, \textup{or} \,\,\, \vecty = - \vectq \, \big] \quad .
\end{equation}
It is easy to see that the modes $\s_{\vecty}\,$, $\sin_{\vectk}$ and $\cos_{\vectq}\,$, whose indexes belong to dif\-ferent equivalence classes of $\sim\,$, are linearly independent. On the other hand, assuming $\vecty \neq \vect{0}\,$, the equivalence class $[\vecty]$ contains two sigma modes, and four sinusoidal modes. The latter are not linearly independent, since $\cos_{\vecty}$ and $\cos_{-\vecty}$ are proportional one to the other, and the same is true of $\sin_{\vecty}$ and $\sin_{-\vecty}\,$. This implies that there exists a non-trivial transformation on the couple $(\ti{f}_{_{1\!}}\, , \ti{f}_{_{2}})$ which leaves the integral \eqref{naive real Fourier expansion} unchanged. In fact, if we substitute the couple $(\ti{f}_{_{1\!}}\, , \ti{f}_{_{2}})$ with the couple $(\ti{g}_{_{1\!}}\, , \ti{g}_{_{2}})\,$, the integral remains unchanged if
\begin{align*}
\Big( \ti{f}_{_{1\!}}(\vecty) - \ti{g}_{_{1\!}}(\vecty) \Big) \, \cos_{\vecty}(\vectx) + \Big( \ti{f}_{_{1\!}}(-\vecty) - \ti{g}_{_{1\!}}(-\vecty) \Big) \, \cos_{-\vecty}(\vectx) &= 0 \quad , \\[2mm]
\Big( \ti{f}_{_{2\!}}(\vecty) - \ti{g}_{_{2\!}}(\vecty) \Big) \, \sin_{\vecty}(\vectx) + \Big( \ti{f}_{_{2\!}}(-\vecty) - \ti{g}_{_{2\!}}(-\vecty) \Big) \, \sin_{-\vecty}(\vectx) &= 0 \quad ,
\end{align*}
and these conditions can always be satisf\-ied non-trivially because of the linear dependence. No such freedom exists for the expansion \eqref{real Fourier expansion}, because the modes $\s_{\vecty}$ and $\s_{-\vecty}$ are linearly independent.

From the point of view of the expansion \eqref{naive real Fourier expansion}, there are two straightforward approaches to remedy this problem. One is to continue using sinusoidal modes, and integrate only over half of the reciprocal space. That is, to leave the expression \eqref{naive real Fourier expansion} unchanged but for the fact that the domain of integration becomes the quotient $\mbbR^{n}/\!\!\sim$ instead of $\mbbR^{n}$. The other, equivalent, approach is integrate over the whole space $\mbbR^{n}$, but at the same time restrict the function $\ti{f}_{_{1\!}}$ to be parity-even and the function $\ti{f}_{_{2\!}}$ to be parity-odd. This restriction ef\-fectively identif\-ies the mode $\cos_{-\vecty}$ with $\cos_{\vecty}\,$, and ef\-fectively identif\-ies the mode $\sin_{-\vecty}$ with $- \sin_{\vecty}\,$.

\smallskip
One may however prefer to avoid restricting the domain of integration to the quotient $\mbbR^{n}/\!\!\sim\,$, and also avoid imposing that the amplitude functions $\ti{f}_{_{1\!}}$ and $\ti{f}_{_{2\!}}$ have def\-inite parity. In this case one is forced to act on the modes, extracting somehow two linearly independent modes from the four sinusoidal ones (at each $[\vecty]$). From this perspective, the introduction of the real Fourier transform can be motivated by the desire of f\-inding a real expansion over ``modif\-ied sinusoidal'' modes, such that the domain of integration in the expansion is the whole space $\mbbR^{n}$, and yet the modes are linearly independent. To achieve this aim, we should look for a set of modes $( \s_{\vecty})_{\vecty \in \mbbR^{n}}\,$, such that $\s_{\vecty}$ and $\s_{-\vecty}$ are linearly independent and generate the four modes $\cos_{\vecty}\,$, $\cos_{-\vecty}\,$, $\sin_{\vecty}$ and $\sin_{-\vecty}\,$. The parity properties of the sinusoidal modes imply that it is suf\-f\-icient to generate the two modes $\cos_{\vecty}$ and $\sin_{\vecty}\,$.

To f\-ind such sigma modes, it is useful to ref\-lect on the fact that the problem with the sinusoidal modes is that the index transformation $\vecty \to - \vecty$ produces functions proportional to the initial ones
\begin{align*}
\cos_{\vecty} &\to \cos_{-\vecty} = \cos_{\vecty} \quad , & \sin_{\vecty} &\to \sin_{-\vecty} = - \sin_{\vecty} \quad .
\end{align*}
A promising way out is then to def\-ine the new modes by summing functions of opposite parity, so that the transformation $\vecty \to - \vecty$ has a non-trivial ef\-fect. For example, choosing $\s_{\vecty} = \cos_{\vecty} - \sin_{\vecty}\,$, under the transformation $\vecty \to - \vecty$ one has
\begin{equation*}
\s_{-\vecty} = \cos_{-\vecty} - \sin_{-\vecty} = \cos_{\vecty} + \sin_{\vecty} \quad ,
\end{equation*}
which is \emph{not} proportional to $\s_{\vecty}\,$. This solves the problem, because the modes $\s_{\vecty}$ and $\s_{-\vecty}$ manifestly generate $\cos_{\vecty}$ and $\sin_{\vecty}\,$, as desired.

\smallskip
It worthwhile to mention that, besides being linearly independent, the family of modes
\begin{equation*}
\Bigg( \frac{1}{\sqrt{(2 \pi)^{n}}} \,\, \s_{\vecty} \Bigg)_{\vecty \in \mbbR^{n}}
\end{equation*}
is orthonormal in Dirac's sense. In other words, these modes are ``normalized to Dirac deltas''.


\begin{thebibliography}{99}
%
\bibitem{Rudin 2}
W.\ Rudin, \emph{Real and Complex Analysis, 3rd Edition} (McGraw-Hill, 1987).
%
\bibitem{Reed Simon 2}
M.\ Reed and B.\ Simon, \emph{Methods of Modern Mathematical Physics II} (Academic Press, 1975).
%
\bibitem{Reed Simon 1}
M.\ Reed and B.\ Simon, \emph{Methods of Modern Mathematical Physics I, Revised and Enlarged Edition} (Academic Press, 1980).
%
\bibitem{Pinsky 2002}
M.A.\ Pinsky, \emph{Introduction to Fourier Analysis and Wavelets} (American Mathematical Society, 2002).
%
\end{thebibliography}
\end{document}